\theoremstyle{plain}
\newtheorem{theorem}{\protect\theoremname}
\theoremstyle{remark}
\newtheorem{remark}{\protect\remarkname}
\newcommand{\be}{\begin{equation}}
	\newcommand{\ee}{\end{equation}}
\newcommand{\bey}{\begin{eqnarray}}
	\newcommand{\eey}{\end{eqnarray}}
\newcommand{\beyn}{\begin{eqnarray*}}
	\newcommand{\eeyn}{\end{eqnarray*}}
\newif\ifApproveEdit
  \newcommand{\DEL}[1]{\iffalse{#1}\fi}
  \newcommand{\DEL}[1]{\st{#1}}
\renewcommand{\baselinestretch}{1.5} 
\def\singlespace{\def\baselinestretch{1}\@normalsize}
\providecommand{\remarkname}{Remark}
\providecommand{\theoremname}{Theorem}
\newif\ifApproveEdit
\newcommand{\del}[1]{\iffalse{#1}\fi}
\newcommand{\del}[1]{\st{#1}}
\global\long\def\b#1{{\bf \bm{\mathit{#1}}}}
\global\long\def\0{\mbox{\b 0}}
\global\long\def\bn{\b n}
\global\long\def\bh{\b h}
\global\long\def\bu{\b u}
\global\long\def\bx{\b x}
\global\long\def\by{\b y}
\global\long\def\bz{\b z}
\global\long\def\bI{\b I}
\global\long\def\bJ{\b J}
\global\long\def\bD{\b D}
\global\long\def\bR{\b R}
\global\long\def\bmu{\b {\mu}}
\global\long\def\bSigma{\b{\Sigma}}
\global\long\def\bGamma{\b {\Gamma}}
\global\long\def\bOmega{\b {\Omega}}
\global\long\def\barbx{\bar{\bx}}
\global\long\def\barby{\bar{\by}}
\global\long\def\h#1{\hat{#1}}
\global\long\def\hbSigma{\h{\bSigma}}
\global\long\def\hbOmega{\h{\bOmega}}
\global\long\def\iidsim{\stackrel{\text{i.i.d.}}{\sim}}
\global\long\def\tr{\operatorname{tr}}
\global\long\def\E{\operatorname{E}}
\global\long\def\Cov{\operatorname{Cov}}
\global\long\def\Var{\operatorname{Var}}
\global\long\def\ARE{\operatorname{ARE}}
\global\long\def\diag{\operatorname{diag}}
\renewcommand{\H}{\mbox{H}}
\newcommand{\convP}{\stackrel{P}{\longrightarrow}}
\newcommand{\convL}{\stackrel{L}{\longrightarrow}}
\global\long\def\dequ{\stackrel{d}{=}}
\newcommand{\calN}{\mathcal{N}}
\newcommand{\calK}{\mathcal{K}}
\newcommand{\tF}{\tilde{F}}
\newcommand{\hd}{\hat{d}}
\newcommand{\halpha}{\hat{\alpha}}
\begin{document}
	
	\author[a]{\small Tianming Zhu}
	\author[a]{\small Pengfei Wang}
	\author[b]{\small Jin-Ting Zhang}
	\affil[a]{\footnotesize  National Institute of Education, Nanyang Technological University, 
		Singapore}
	\affil[b]{\footnotesize Department of Statistics and Data Science, National University of Singapore, 
		Singapore}
	
	\title{\Large  Two-sample Behrens--Fisher problems for high-dimensional data: a normal reference $F$-type test} 

\maketitle

\vspace{-1cm}

\abstract{	The problem of testing the equality of mean vectors for high-dimensional data  has been intensively  investigated in the literature.  However, most of the existing  tests impose strong assumptions on the underlying group covariance matrices which  may not be satisfied or hardly be checked in practice. In this article, an $F$-type test for two-sample Behrens--Fisher problems for high-dimensional data is  proposed and studied. When the two samples are normally distributed and when the null hypothesis is valid,  the proposed  $F$-type test statistic is shown to be  an $F$-type mixture,  a ratio of two independent $\chi^2$-type mixtures. Under some regularity conditions and the null hypothesis, it is shown that the proposed $F$-type test statistic and the above $F$-type mixture have the same normal and  non-normal limits. It is then  justified  to approximate the null distribution of the proposed $F$-type  test statistic by that of the $F$-type mixture, resulting in the so-called normal reference $F$-type test. Since the $F$-type mixture is a ratio of two independent $\chi^2$-type mixtures, we employ the Welch--Satterthwaite $\chi^2$-approximation to the distributions of the numerator and the denominator of the $F$-type mixture respectively, resulting in an approximation $F$-distribution whose degrees of freedom can be  consistently estimated from the data. The asymptotic power of the proposed $F$-type  test is established.  Two simulation studies are conducted and they show that in terms of size control,  the proposed   $F$-type test outperforms two existing competitors. The proposed $F$-type test is also illustrated by a real data example.}

	\noindent{\bf KEY WORDS}:
High-dimensional Behrens--Fisher problem; $F$-type test; $\chi^2$-type mixtures;  $F$-type mixture; Welch--Satterthwaite $\chi^2$-approximation.
\maketitle


\section{Introduction}\label{Intro.sec}
Nowadays, large amount of information is frequently  recorded and stored for analytics purpose. Applications of high-dimensional data have been found in various domains including  economics, genetics, pharmacy, medicine and so on. This paper is motivated by a  coronavirus disease 2019 (COVID-19, also known as SARS-COV-2) data set described and studied in \cite{thair2021transcriptomic}.   As an novel coronavirus, COVID-19 has developed into a global pandemic and affected  millions of people during the last three years. However, the actual knowledge about COVID-19 is still limited. \cite{thair2021transcriptomic} profiled peripheral blood from 24 healthy controls and 62 prospectively enrolled patients with community-acquired lower respiratory tract infection by SARS-COV-2 within the first 24 hours of hospital admission using RNA sequencing. It is of interest and worthwhile to check whether those prospectively enrolled patients with COVID-19 and healthy controls have the same mean RNA sequencing transcriptome profiles. Each RNA sequencing transcriptome profile has $20,460$ measurements. This means that the dimension of a datum point  is  much larger than the total sample size $86$. Thus, the above problem is a two-sample  problem for high-dimensional data. For high-dimensional data,    it is generally very  difficult to check whether the  underlying covariance matrices of the two samples  are equal. Thus, the above problem is also called  a two-sample Behrens--Fisher (BF) problem for high-dimensional data.

Mathematically, a two-sample BF problem for high-dimensional data can be described as follows. Suppose we have two independent high-dimensional samples:
\begin{equation}\label{2samp.equ}
	\by_{i1},\ldots,\by_{in_i} \mbox{ are i.i.d. with }\E(\by_{i1})=\bmu_i, \Cov(\by_{i1})=\bSigma_i, i=1,2,
\end{equation}
where the dimension of the observations $\by_{ij}$ is $p$,  which is close to or even much larger than the total sample size $n=n_1+n_2$. Of interest is to  test whether the two mean vectors $\bmu_1$ and $\bmu_2$ are equal:
\begin{equation}\label{H0.equ}
	\H_0: \bmu_1=\bmu_2\; \mbox{ vs }\; \H_1: \bmu_1\neq\bmu_2,
\end{equation}
without  assuming that the two covariance matrices are equal, i.e.,  $\bSigma_1=\bSigma_2$.

When $p=1$, the above problem reduces to the well-known two-sample BF problem for univariate data, which has been studied by a number of authors in the past few decades, including \cite{fisher1935fiducial, fisher1939comparison,scheffe1970practical,tang2007distributional,zhang2013IJASP,liuguozhouzhang2016SRL} among others. When $p$ is small and fixed, the above problem reduces to the so-called two-sample BF problem for multivariate data, which has also  been investigated by a number of authors in the past few decades,  including
\cite{james1954tests,yao1965approximate, johansen1980welch,zhang2011tech, zhang2012open, zhangzhouguoliu2016JAS} among others. Many of these works  are based on the following classical Wald-type test statistic:
\be\label{Tn.equ}
T^2_W=\frac{n_1n_2}{n}(\barby_1-\barby_2)^\top\hbOmega_n^{-1}(\barby_1-\barby_2),
\ee
where $\barby_1$  and $\barby_2$ are the sample mean vectors $\barby_i=n_i^{-1}\sum_{j=1}^{n_i}\by_{ij},i=1,2,$
and with
\be\label{hSigma.sec1}
\hbSigma_i=(n_i-1)^{-1}\sum_{j=1}^{n_i}(\by_{ij}-\barby_i)(\by_{ij}-\barby_i)^\top,\; i=1,2,
\ee
being  the usual sample covariance matrices for the two samples,
\be\label{hbOmega.equ}
\hbOmega_n=\frac{n_2}{n}\hbSigma_1+\frac{n_1}{n}\hbSigma_2,
\ee
is the usual unbiased estimator of the covariance matrix $\bOmega_n$ of $(n_1n_2/n)^{1/2}(\barby_1-\barby_2)$, i.e.,
\be\label{bOmega.equ}
\bOmega_n=\Cov\Big[(n_1n_2/n)^{1/2}(\barby_1-\barby_2)\Big]=\frac{n_2}{n}\bSigma_1+\frac{n_1}{n}\bSigma_2.
\ee
In the high-dimensional scenario, we often have $p>n$ which  means that the covariance matrices  $\hbSigma_1$ and $\hbSigma_2$ are  singular so that  the above mentioned  $T^2_W$-based   tests  are no longer applicable. In fact,  even when $p$ is smaller than but close to  the total sample size $n$, as seen from the  simulation results presented  in \cite{ZhangTSBF2020}, these $T_W^2$-based BF tests  may not have a good size control  and are less  powerful.  This shows that these traditional  BF tests for multivariate data  are less useful in high-dimensional settings.

To overcome the above difficulties, much work has been done for the two-sample  BF problems for high-dimensional data  in the past decade.  For example,  \cite{chen2010two} proposed  an $L^2$-norm  based approach using   the following U-statistics-based  test statistic
\be\label{CQstat}
T_{CQ}=\frac{\sum_{i\neq j}^{n_1}\by_{1i}^\top \by_{1j}}{n_1(n_1-1)}+\frac{\sum_{i\neq j}^{n_2}\by_{2i}^\top \by_{2j}}{n_2(n_2-1)}-2\frac{\sum_{i=1}^{n_1}\sum_{j=1}^{n_2}\by_{1i}^\top\by_{2j}}{n_1n_2}.
\ee
When both $n$ and $p$ are large, however, it is very time-consuming to compute  $T_{CQ}$ as demonstrated in \cite{zhangguozhoucheng2020JASA}. By some simple algebra, fortunately, we can
rewrite $T_{CQ}$ as $T_{CQ}=\|\barby_1-\barby_2\|^2-n_1n_2n^{-1}\tr(\hbOmega_n)$ so that $T_{CQ}$ can be computed much more quickly where $\hbOmega_n$ is defined in (\ref{hbOmega.equ}) and $\tr(\hbOmega_n)$ denotes the trace of $\hbOmega_n$.  Notice that \cite{chen2010two} constructed  $T_{CQ}$  to estimate $\|\bmu_1-\bmu_2\|^2$ unbiasedly and they imposed strong assumptions  so that $T_{CQ}$ is asymptotically normally distributed. In real data analysis,  $T_{CQ}$ is often   conducted by  normal approximation to its null distribution without checking if the required assumptions are satisfied. However, when the required  assumptions
as those imposed in  \cite{chen2010two} are not satisfied, \cite{ZhangTSBF2020} showed that $T_{CQ}$  may not be asymptotically  normally distributed and  the test results obtained by $T_{CQ}$  may then be less reliable; see Section~\ref{appl.sec} for some details.

To overcome this problem,  \cite{ZhangTSBF2020}  proposed a normal reference test using the following   $L^2$-norm based test statistic:
\begin{eqnarray}
	T_{n,p}=\frac{n_1n_2}{n}\|\barby_1-\barby_2\|^2, \label{Tstat.sec1}
\end{eqnarray}
which is proportional to the squared $L^2$-norm of the sample mean difference vector $\barby_1-\barby_2$. \cite{ZhangTSBF2020} showed that under some mild conditions, $T_{n,p}$ and a chi-square-type mixture have the same asymptotic normal or non-normal  distributions. It is then justified to approximate  the null distribution of $T_{n,p}$  using that of the  $\chi^2$-type mixture whose distribution can be well approximated  using the well-known Welch--Satterthwaite $\chi^2$-approximation with approximation parameters consistently estimated from the data \cite{satterthwaite1946approximate,welch1947generalization,zhangguozhoucheng2020JASA}.

Notice that in \cite{ZhangTSBF2020},  to conduct the normal reference  test $T_{n,p}$, the variation of $T_{n,p}$ has  not been  taken into account. This means that when the total  sample size $n$ is small, the size control of $T_{n,p}$ will be  less accurate. This is actually verified by  some simulation results presented  in \cite{ZhangTSBF2020} where   for some simulation  cases with small total sample sizes, in terms of size control,  $T_{n,p}$  is still somewhat liberal. This gives a good motivation for us to take the variation of $T_{n,p}$ into account to construct a test statistic with  a better size control than $T_{n,p}$. To this end, we mimic the construction of the classical $F$-test statistic in  univariate and multivariate data analysis, see for example \cite{anderson2003introduction},  and  propose the following $F$-type test statistic
\begin{equation}\label{Fnp.equ}
	F_{n,p}=\frac{T_{n,p}}{S_{n,p}}=\frac{n_1n_2n^{-1}\|\barby_1-\barby_2\|^2}{\tr(\hbOmega_n)},
\end{equation}
where $T_{n,p}$ is defined in (\ref{Tstat.sec1}) and with $\hbOmega_n$ defined in (\ref{hbOmega.equ}), 
\be\label{Snp.sec1}
S_{n,p}=\tr(\hbOmega_n)=\frac{n_2}{n}\tr(\hbSigma_1)+\frac{n_1}{n}\tr(\hbSigma_2),
\ee
is an  unbiased estimator of
\be\label{trOmega.sec1}
\E(T_{n,p})=\tr(\bOmega_n)=\frac{n_2}{n}\tr(\bSigma_1)+\frac{n_1}{n}\tr(\bSigma_2).
\ee
The construction of the above $F$-type test statistic $F_{n,p}$  guarantees that under the null hypothesis,  the numerator $T_{n,p}$ and the denominator $S_{n,p}$ of $F_{n,p}$  have the same expectation. Notice that $F_{n,p}$ can also be obtained from the Wald-type test statistic (\ref{Tn.equ})  via  replacing the covariance matrix  $\hbOmega_n$  with the  matrix  $\tr(\hbOmega_n)\bI_p$, partially taking the variation of $\sqrt{n_1n_2/n}(\by_1-\by_2)$ into account.

The main idea for constructing the $F$-type test statistic for high-dimensional hypothesis testing may be dated back to \cite{dempster1958high,dempster1960signi} although his non-exact test statistic is constructed in a complicated way and under the  Gaussian and equal-covariance matrix  assumptions. After some complicated derivation, \cite{dempster1958high,dempster1960signi}  showed that his non-exact test statistic can be well approximated by an $F$-distribution with $r$ and $(n-2)r$ degrees of freedom where $r$ is some approximation parameter which can be consistently estimated from the data. \cite{bai1996effect} pioneerly  investigated Dempster's non-exact test and showed that under some regularity conditions, Dempster's non-exact test statistic is asymptotically normally distributed without imposing  the restricted  Gaussian assumption. \cite{Srivastava2006JMV} extended Dempster's non-exact test for high-dimensional MANOVA and showed that under some regularity conditions,  the resulting non-exact test statistic is also asymptotically normally distributed without imposing the restricted Gaussian assumption. Nevertheless, as seen from \cite{bai1996effect} and \cite{Srivastava2006JMV},    the asymptotic normality of these non-exact tests is guaranteed only when some strong assumptions are imposed on the underlying common covariance matrix of the  high-dimension samples. This means that when these strong  assumptions are not satisfied, these non-exact tests will be less accurate in terms of size control.

To overcome the  difficulties mentioned above, in this paper, we  propose a normal reference approach which may be briefly described as follows. For convenience,   let $F_{n,p,0}$ denote the test statistic $F_{n,p}$ (\ref{Fnp.equ}) under the null hypothesis  and further let $F_{n,p,0}^*$ denote $F_{n,p,0}$ when the data are Gaussian. We call the distribution of $T_{n,p,0}^*$ as the normal reference distribution of $F_{n,p,0}$ and show that the distribution of $F_{n,p,0}^*$ is an $F$-type mixture (\ref{Fnp0star.equ})  as described in \cite[Sec. 4.4]{zhang2013analysis}.  The core idea of the normal reference approach for $F_{n,p}$ is to  approximate  the distribution of $F_{n,p,0}$ using that of $F_{n,p,0}^*$.  We justify this normal reference approach via showing that under some regularity conditions, $F_{n,p,0}$ and $F_{n,p,0}^*$ have the same normal or non-normal asymptotic distributions (see Theorem~\ref{limit.thm} of Section~\ref{main.sec}  for details). Following \cite{dempster1958high,dempster1960signi}, we can further approximate the numerator and denominator of the $F$-type mixture $F_{n,p,0}^*$ using  the Welch--Satterthwaite $\chi^2$-approximation respectively so that the $F$-type mixture $F_{n,p,0}^*$  can be well approximated with an approximation $F$-distribution with its degrees of freedom estimated consistently from the data; see Section~\ref{main.sec} for more details. Compared with \cite{dempster1958high,dempster1960signi, bai1996effect, Srivastava2006JMV}'s non-exact tests, our $F$-type  test works without the Gaussian and equal-covariance matrix assumptions and we approximate the null distribution of $F_{n,p}$ using an $F_{d_1,d_2}$-distribution instead of a normal distribution for normal and non-normal high-dimensional data. The degrees of freedom of the $F_{d_1,d_2}$-distribution can be consistently estimated from the data.  Simulation studies conducted in Section~\ref{simu.sec} indicate that in terms of size control, our $F$-type  test outperforms the tests by \cite{ZhangTSBF2020} and \cite{chen2010two} generally.

The rest of this paper is organized as follows. The main results are presented  in Section~\ref{main.sec} where the asymptotic distributions of $F_{n,p,0}$ and $F_{n,p,0}^*$ are derived,
the methods for approximating the null distribution of $F_{n,p}$  are described, and  the asymptotic power of $F_{n,p}$  under a local alternative is established. Two simulation studies and applications to the COVID-19 data set are presented
in Sections~\ref{simu.sec} and~\ref{appl.sec}, respectively. We give some concluding remarks in Section~\ref{remark.sec} and leave the technical proofs of the main results   in the Appendix.
	\section{Main Results}\label{main.sec}
\subsection{Asymptotic null distribution}

We first  derive the null distribution of $F_{n,p}$ (\ref{Fnp.equ}). For this purpose, we set
\be\label{csamp.sec2}
\bx_{ij}=\by_{ij}-\bmu_i,j=1,\ldots,n_i;i=1,2,
\ee
as the centering two samples obtained from the two original samples (\ref{2samp.equ}) after subtracting the associated group mean vectors.  It follows that the sample mean vectors and sample covariance matrices of the centering two samples (\ref{csamp.sec2}) are given by
$\barbx_i=\barby_i-\bmu_i,i=1,2$ and $\hbSigma_i=(n_i-1)^{-1}\sum_{j=1}^{n_i}(\bx_{ij}-\barbx_i)(\bx_{ij}-\barbx_i)^\top,i=1,2$ respectively.
Set
\begin{equation*}\label{Fnp0.equ}
	F_{n,p,0}=\frac{T_{n,p,0}}{S_{n,p,0}}=\frac{n_1n_2n^{-1}\|\barbx_1-\barbx_2\|^2}{\tr(\hbOmega_n)},
\end{equation*}
where $S_{n,p,0}=\tr(\hbOmega_n)$ is based on the centering two samples (\ref{csamp.sec2}) but it is the same as $S_{n,p}=\tr(\hbOmega_n)$  (\ref{Snp.sec1}) based on the original two samples (\ref{2samp.equ}). Notice also that $\E(\hbOmega_n)=\bOmega_n$ which is  defined in (\ref{bOmega.equ}).
It is easy to see that the distribution of $F_{n,p,0}$ is the same as the null distribution of $F_{n,p}$. Therefore, studying the null distribution of $F_{n,p}$ is equivalent to studying the distribution of $F_{n,p,0}$.

Throughout this paper, let $\chi_v^2$ denote a central chi-square distribution with $v$ degrees of freedom.  When the two samples (\ref{2samp.equ}) are normally distributed, it is easy to see that
for any given $n$ and $p$, the distribution of $T_{n,p,0}$ has the same distribution as that of the following  $\chi^2$-type mixture:
\be\label{Tnp0star.equ}
T_{n,p,0}^*\dequ \sum_{r=1}^p\lambda_{n,p,r}A_r, \;\; A_r\iidsim \chi_1^2,
\ee
where $\dequ$ denotes equality in distribution,  and  the distribution of $S_{n,p,0}=\tr(\hbOmega_n)$ has the same distribution as that of the following chi-square-type mixture:
\be\label{Snp0star.equ}
\begin{split}
&S^*_{n,p,0}\dequ \frac{n_2}{n(n_1-1)}\sum_{r=1}^p\lambda_{1r}B_{1r}+\frac{n_1}{n(n_2-1)}\sum_{r=1}^p\lambda_{2r}B_{2r},\\
& B_{1r}\iidsim \chi_{n_1-1}^2,\;B_{2r}\iidsim \chi_{n_2-1}^2,
\end{split}
\ee
where $\lambda_{n,p,1},\ldots,\lambda_{n,p,p}$ are the eigenvalues of the covariance matrix $\bOmega_n$ as defined in (\ref{bOmega.equ}) and $\lambda_{1r},r=1,\ldots,p$ and $\lambda_{2r},r =1,\ldots,p$ are the eigenvalues of $\bSigma_1$ and $\bSigma_2$, respectively. Note that under the Gaussian assumption, $T_{n,p,0}$ and $S_{n,p,0}$ are independent. Therefore, when the two samples (\ref{2samp.equ}) are normally distributed, for any given $n$ and $p$, the distribution of $F_{n,p,0}$ has the same distribution as that of $F_{n,p,0}^*$:
\begin{equation}\label{Fnp0star.equ}
	F_{n,p,0}^*=\frac{T_{n,p,0}^*}{S^*_{n,p,0}}\dequ\frac{\sum_{r=1}^p\lambda_{n,p,r}A_r}{\left[(n_1 -1)^{-1}n_2\sum_{r=1}^p\lambda_{1r}B_{1r}+(n_2-1)^{-1}n_1\sum_{r=1}^p\lambda_{2r}B_{2r}\right]/n},
\end{equation}
where $A_r,r=1,\ldots,p$, $B_{1r},r=1,\ldots,p$ and $B_{2r},r=1,\ldots,p$ are mutually independent. Notice that $F_{n,p,0}^*$  is an $F$-type mixture, i.e.,  a ratio of two independent $\chi^2$-type mixtures as defined in \cite[Sec. 4.4]{zhang2013analysis}.

As mentioned in the introduction section,  $F_{n,p,0}^*$ denotes  $F_{n,p,0}$ when the two samples (\ref{2samp.equ}) are normally distributed, and  we call the distribution of $F_{n,p,0}^*$ as the  normal reference distribution of $F_{n,p,0}$. In what follows, we shall show that we can approximate the distribution of $F_{n,p,0}$ using the distribution of $F_{n,p,0}^*$ asymptotically.  For further study, we now derive the first three cumulants (mean, variance, and third central moment) of $T_{n,p,0}^*, S_{n,p,0}^*$, and $F_{n,p,0}^*$.
For simplicity, let $\calK_l(X), l=1,2,3$ denote the first three cumulants of a random variable $X$.
Then  the first three cumulants of $T_{n,p,0}^*$ are given by
\begin{equation}\label{EVA.equ}
	\calK_1(T_{n,p,0}^*)=\tr(\bOmega_n),\; \calK_2(T_{n,p,0}^*)=2\tr(\bOmega_n^2),\;\mbox{ and }\;\calK_3(T_{n,p,0}^*)=8\tr(\bOmega_n^3),
\end{equation}
see  \cite{ZhangTSBF2020} for details.  Furthermore, it follows from  Eq. (4) of \cite{zhang2005approximate} that
\begin{equation}\label{EVB.equ}
	\begin{split}
		\calK_1(S_{n,p,0}^*)&=\tr(\bOmega_n),\\ \calK_2(S_{n,p,0}^*)&=2\left[\frac{n_2^2}{n^2(n_1-1)}\tr(\bSigma_1^2)+\frac{n_1^2}{n^2(n_2-1)}\tr(\bSigma_2^2)\right],\;\mbox{ and }\\
		\calK_3(S_{n,p,0}^*)&=8\left[\frac{n_2^3}{n^3(n_1-1)^2}\tr(\bSigma_1^3)+\frac{n_1^3}{n^3(n_2-1)^2}\tr(\bSigma_2^3)\right].
	\end{split}
\end{equation}
By (\ref{EVA.equ}) and (\ref{EVB.equ}), the first three cumulants of $F_{n,p,0}^*$ are then  given by
\be\label{3cumuFstar.equ}
\begin{split}
	\calK_1(F_{n,p,0}^*)&=\frac{\calK_1(T_{n,p,0}^*)}{\calK_1(S_{n,p,0}^*)}[1+o(1)]=1+o(1),\\
	\calK_2(F_{n,p,0}^*)&=\left[\frac{\calK_2(T_{n,p,0}^*)}{\calK_1^2(S_{n,p,0}^*)}+\frac{\calK_1^2(T_{n,p,0}^*)}{\calK_1^4(S_{n,p,0}^*)}\calK_2(S_{n,p,0}^*)\right][1+o(1)]\\
	&=2\left[\frac{\tr(\bOmega_n^2)}{\tr^2(\bOmega_n)}+
	\frac{\frac{n_2^{2}}{n_1-1}\tr(\bSigma_1^2)+\frac{n_1^{2}}{n_2-1}\tr(\bSigma_2^2)}{n^2\tr^2(\bOmega_n)} \right][1+o(1)],\;\mbox{ and }\\
	\calK_3(F_{n,p,0}^*)&=\left[\frac{\calK_3(T_{n,p,0}^*)}{\calK_1^3(S_{n,p,0}^*)}+\frac{\calK_1^3(T_{n,p,0}^*)}{\calK_1^6(S_{n,p,0}^*)}\calK_3(S_{n,p,0}^*)\right][1+o(1)]\\
	&=8\left[\frac{\tr(\bOmega_n^3)}{\tr^3(\bOmega_n)}+\frac{\frac{n_2^3}{(n_1-1)^{2}}\tr(\bSigma_1^3)+\frac{n_1^3}{(n_2-1)^{2}}\tr(\bSigma_2^3)}{n^3\tr^3(\bOmega_n)} \right][1+o(1)],
\end{split}
\ee
by dropping the higher order terms.

Set $\rho_{n,p,r}=\lambda_{n,p,r}/\sqrt{\tr(\bOmega_n^2)}, r=1,\ldots,p$ which are the eigenvalues of $\bOmega_n/\sqrt{\tr(\bOmega_n^2)}$ in descending order. We impose the following conditions:
\begin{enumerate}
	\item [{C1.}] We assume  $\by_{ij}=\bmu_{i}+\bGamma_i\bz_{ij}, j=1,\dots,n_{i}, i=1,2$,
	where each $\bGamma_i$ is a $p\times m$ matrix for some $m\geq p$ such that  $\bGamma_i\bGamma_i^{\top}=\bSigma_i$
	and $\bz_{ij}$'s are i.i.d. $m$-vectors with $\E(\bz_{ij})=\0$
	and $\Cov(\bz_{ij})=\bI_{m}$, the $m\times m$ identity matrix.
	\item [{C2.}] Assume $\E(z_{ijk}^{4})=3+\Delta<\infty$ where $z_{ijk}$
	is the $k$-th component of $\bz_{ij}$, $\Delta$ is some constant,
	and $\E[z_{ij1}^{\alpha_{1}}\dots z_{ijp}^{\alpha_{p}}]=\E[z_{ijl_1}^{\alpha_1}]\ldots\E[z_{ijl_q}^{\alpha_q}]$ for a positive integer $q$ such that $\sum_{l=1}^q\alpha_l\leq 8$ and $l_1\neq \cdots \neq l_q$.
	\item[{C3.}] As $n\to\infty$, we have $n_1/n\to\tau\in(0,1)$.
	\item[{C4.}] We assume that  $\lim_{n,p\to\infty} \rho_{n,p,r}=\rho_{r},  r=1,2,\ldots$, uniformly  and $\lim_{n,p\to\infty} \sum_{r=1}^p \rho_{n,p,r}=\sum_{r=1}^{\infty} \rho_r<\infty$.
	\item[{C5.}] As $p\to\infty$, we have $\tr(\bSigma_i\bSigma_j\bSigma_l\bSigma_h)=o\left\{\tr^{2}[(\bSigma_1+\bSigma_2)^{2}]\right\}$ as $p\to\infty$  for all $i,j,l,h=1$ or $2$.
\end{enumerate}
Conditions C1 and C2  are also  imposed by \cite{chen2010two} and \cite{bai1996effect}.  They specify a factor model for high-dimensional data analysis. Condition C3 is a regularity condition for two-sample problems. It ensures that the two sample sizes
$n_1$ and $n_2$ tend to infinity proportionally. Under Condition C3, by (\ref{3cumuFstar.equ}),  as $n\to\infty$, we have
\be\label{omega.equ}
\bOmega_n\to\bOmega=(1-\tau)\bSigma_1+\tau\bSigma_2,
\ee
and
\be\label{3cFstarlimit.equ}
\begin{split}
\calK_1(F_{n,p,0}^*)&=1+o(1),\;\calK_2(F_{n,p,0}^*)=\frac{2\tr(\bOmega_n^2)}{\tr^2(\bOmega_n)}[1+o(1)],\;\mbox{ and }\;\\
 \calK_3(F_{n,p,0}^*)&=\frac{8\tr(\bOmega_n^3)}{\tr^3(\bOmega_n)}[1+o(1)].
 \end{split}
\ee
Condition C4 ensures the existence of the limits of $\lambda_{n,p,r}$ as $n,p\to\infty$ and that the limit and summation operations in the expression $\lim_{n,p\to\infty}\sum_{r=1}^p\rho_{n,p,r},$ are exchangeable. It is used to ensure that the limiting distributions of the normalized  $F_{n,p,0}$ and $F_{n,p,0}^*$, namely,
\[
\tF_{n,p,0}=\frac{F_{n,p,0}-1}{\sqrt{2\tr(\bOmega_n^2)/\tr^2(\bOmega_n)}},\;\mbox{ and }\;\tF_{n,p,0}^*=\frac{F_{n,p,0}^*-1}{\sqrt{2\tr(\bOmega_n^2)/\tr^2(\bOmega_n)}},
\]
are not normal. Condition C5 is imposed by \cite{chen2010two} which is used to ensure that the limiting distributions of  $\tF_{n,p,0}$ and $\tF_{n,p,0}^*$ are  normal. 
Let  $\convL$ denote convergence in distribution, respectively.  We have the following useful theorem.
\begin{theorem}\label{limit.thm}
	(a) Under Conditions C1--C4, as $n,p\to\infty$, we have
	\[
	\tF_{n,p,0}\convL \zeta,\;\mbox{ and }\;\tF^*_{n,p,0}\convL \zeta,
	\]
	where $\zeta\dequ \sum_{r=1}^\infty \rho_{p,r}(A_r-1)/\sqrt{2}$.\\
	(b) Under Conditions C1--C3 and C5, as $n,p\to\infty$, we have
	\[
	\tF_{n,p,0}\convL \calN(0,1),\;\mbox{ and }\;\tF_{n,p,0}^*\convL \calN(0,1).
	\]
	Then under the conditions of (a) or (b), we always have
	\begin{equation}\label{supcvg.sec2}
		\sup_{x}\vert\Pr(F_{n,p,0}\leq x)-\Pr(F_{n,p,0}^*\leq x )\vert\to 0.
	\end{equation}
\end{theorem}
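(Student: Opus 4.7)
The key is the algebraic identity
\[
\tF_{n,p,0} \;=\; \left[\frac{T_{n,p,0}-\tr(\bOmega_n)}{\sqrt{2\tr(\bOmega_n^2)}} \;-\; \frac{S_{n,p,0}-\tr(\bOmega_n)}{\sqrt{2\tr(\bOmega_n^2)}}\right]\cdot\frac{\tr(\bOmega_n)}{S_{n,p,0}},
\]
obtained by writing $F_{n,p,0}-1 = (T_{n,p,0}-S_{n,p,0})/S_{n,p,0}$, factoring out $\tr(\bOmega_n)/S_{n,p,0}$, and splitting $T_{n,p,0}-S_{n,p,0}$ around $\tr(\bOmega_n)$. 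This reduces the problem to two independent ingredients: a distributional limit for the recentered, renormalized numerator, and an $L^2$-concentration statement for the denominator. The decomposition is deliberately chosen to avoid any joint convergence of $(T_{n,p,0},S_{n,p,0})$, which would be awkward since both are built from the same data; the denominator enters only through its marginal concentration near $\tr(\bOmega_n)$.

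For the denominator, the cumulant formulas in (\ref{EVB.equ}) give $\Var(S_{n,p,0}^*) = O\!\left(n^{-1}[\tr(\bSigma_1^2)+\tr(\bSigma_2^2)]\right)$, and the same bound for the non-Gaussian $S_{n,p,0}$ follows from a routine second-moment computation under Conditions C1--C2. Condition C3, the identity $\bOmega_n=\tfrac{n_2}{n}\bSigma_1+\tfrac{n_1}{n}\bSigma_2$, and the positivity $\tr(\bSigma_1\bSigma_2)\ge 0$ together yield $\tr(\bSigma_i^2)\le C\,\tr(\bOmega_n^2)\le C\,\tr^2(\bOmega_n)$, so Chebyshev's inequality simultaneously delivers $S_{n,p,0}/\tr(\bOmega_n)\convP 1$ and $[S_{n,p,0}-\tr(\bOmega_n)]/\sqrt{\tr(\bOmega_n^2)}=o_P(1)$; the same bounds apply verbatim to $S_{n,p,0}^*$. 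For the numerator, the limits of $[T_{n,p,0}-\tr(\bOmega_n)]/\sqrt{2\tr(\bOmega_n^2)}$ are precisely $\zeta$ under C1--C4 and $\calN(0,1)$ under C1--C3, C5, which are the main conclusions of \cite{ZhangTSBF2020}. The Gaussian copy $T_{n,p,0}^*$ obeys the same limits directly from the representation (\ref{Tnp0star.equ}): under C4, the partial sum $\sum_{r=1}^p \rho_{n,p,r}(A_r-1)/\sqrt{2}$ converges in $L^2$ to $\zeta$ by the uniform convergence of $\rho_{n,p,r}$ to $\rho_r$ together with $\sum_r\rho_r^2=1$, while under C5 the Lyapunov CLT for weighted $\chi^2$'s applies. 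Slutsky's theorem then yields both (a) and (b).

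For the uniform bound (\ref{supcvg.sec2}), observe that $F_{n,p,0}\le x$ iff $\tF_{n,p,0}\le \tilde x$ with $\tilde x = (x-1)/\sqrt{2\tr(\bOmega_n^2)/\tr^2(\bOmega_n)}$, so taking the supremum in $x$ is equivalent to taking it in $\tilde x$; the same substitution applies to $F_{n,p,0}^*$. Both $\tF_{n,p,0}$ and $\tF_{n,p,0}^*$ converge to the same limit, which has a continuous CDF (in case (b) this is obvious; in case (a), $\sum_{r\ge1}\rho_r^2=1$ ensures $\zeta$ is non-degenerate and its CDF is continuous as a convolution of shifted chi-squares). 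Pólya's theorem then upgrades each pointwise CDF convergence to uniform convergence on the real line, and the triangle inequality closes the argument. The main obstacle is exactly the sample-level dependence between $T_{n,p,0}$ and $S_{n,p,0}$; the decomposition in the first paragraph is engineered precisely to bypass it, so that the entire distributional content of the theorem rests on the already-known asymptotics of $T_{n,p,0}$ and on cheap second-moment control of $S_{n,p,0}$.
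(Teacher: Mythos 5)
Your proof is correct and follows essentially the same route as the paper's: both reduce $\tF_{n,p,0}$ (and $\tF^*_{n,p,0}$) to the known asymptotics of the centered, normalized numerator $[T_{n,p,0}-\tr(\bOmega_n)]/\sqrt{2\tr(\bOmega_n^2)}$ by showing the denominator $S_{n,p,0}$ concentrates at $\tr(\bOmega_n)$ via a second-moment/Chebyshev bound, then apply Slutsky and a P\'olya-type argument for the uniform CDF convergence. The only differences are cosmetic: you write out the algebraic identity explicitly where the paper uses $[1+o_p(1)]$ factors, and you re-derive the limits of $T^*_{n,p,0}$ and the ratio-consistency of $S_{n,p,0}$ from scratch where the paper cites Theorems 1 and 2 of Zhang et al.\ (2020).
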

\subsection{Null distribution approximation}

Theorem~\ref{limit.thm} shows that $F_{n,p,0}$ and $F_{n,p,0}^*$ have the same non-normal limit when Conditions C1--C4 hold, and $F_{n,p,0}$ and $F_{n,p,0}^*$ have the same normal limit when Condition C1--C3 and C5 hold. Therefore, it is theoretically justified  that we can  approximate the distribution of $F_{n,p,0}$ by that of $F_{n,p,0}^*$. Note that by  (\ref{Tnp0star.equ}),  (\ref{Snp0star.equ}), and (\ref{Fnp0star.equ}),  $F_{n,p,0}^*=T_{n,p,0}^*/S_{n,p,0}^*$  is a ratio of two independent $\chi^2$-type mixtures $T_{n,p,0}^*$ and $S_{n,p,0}^*$, known as an $F$-type mixture, as described in \cite[Sec. 4.4]{zhang2013analysis}. We can approximate the distribution of an $F$-type mixture via approximating the distributions of $T_{n,p,0}^*$ and $S_{n,p,0}^*$ respectively by  the Welch--Satterthwaite (W--S) $\chi^2$-approximation \cite{satterthwaite1946approximate,welch1947generalization,zhangguozhoucheng2020JASA}. For simplicity, we may call this approach as the $F$-approximation to the distribution of $F_{n,p,0}^*$, which proceeds  as follows.

For the $F$-type mixture $F_{n,p,0}^*=T_{n,p,0}^*/S_{n,p,0}^*$, we first  approximate the distributions of $T_{n,p,0}^*$ and $S_{n,p,0}^*$  using those of the following two independent scaled $\chi^2$-random variables:
\[
R_1\dequ \beta_1\chi_{d_1}^2, \;\mbox{ and }\;R_2\dequ \beta_2\chi_{d_2}^2,
\]
and then approximate the distribution of $F_{n,p,0}^*$ by  that of $R_1/R_2$ where $\beta_1, d_1, \beta_2$, and $d_2$ are approximation parameters.  The approximation parameters $\beta_1$ and $d_1$  are determined via matching the first two cumulants of $T_{n,p,0}^*$  and $R_1$. The first two cumulants of $R_1$ are $\beta_1 d_1$ and $2\beta_1^2 d_1$ while the first two cumulants of $T_{n,p,0}^*$ are given in (\ref{EVA.equ}).  Equating the first two cumulants of $T_{n,p,0}^*$ and $R_1$ leads to
\be\label{betdf1.sec2}
\beta_1=\tr(\bOmega_n^2)/\tr(\bOmega_n), \;\mbox{ and }\; d_1= \tr^2(\bOmega_n)/\tr(\bOmega_n^2).
\ee
Similarly, the approximation  parameters $\beta_2$ and $d_2$  are determined via matching the first two cumulants of  $S_{n,p,0}^*$ and $R_2$. The first two cumulants of $R_2$ are $\beta_2 d_2$ and $2\beta_2^2 d_2$ while the first two cumulants  of $S_{n,p,0}^*$ are presented in (\ref{EVB.equ}). Equating the first two cumulants of $S_{n,p,0}^*$ and $R_2$ then leads to
\be\label{betdf2.sec2}
\begin{array}{rcl}
	\beta_2&=&[(n_2/n)^2(n_1-1)^{-1}\tr(\bSigma_1^2)+(n_1/n)^2(n_2-1)^{-1}\tr(\bSigma_2^2)]/\tr(\bOmega_n), \;\mbox{ and }\\
	d_2&=& \tr^2(\bOmega_n)/[(n_2/n)^2(n_1-1)^{-1}\tr(\bSigma_1^2)+(n_1/n)^2(n_2-1)^{-1}\tr(\bSigma_2^2)].
\end{array}
\ee
By the construction of our $F$-type test with (\ref{Fnp.equ}), (\ref{Snp.sec1}) and (\ref{trOmega.sec1}), we have $\E(T_{n,p,0}^*)=\E(S_{n,p,0}^*)$, implying  that $\beta_1 d_1=\beta_2 d_2$. Since $T_{n,p,0}^*$ and $S_{n,p,0}^*$ are independent, so are $R_1$ and $R_2$,  we have
\be\label{Fapprox.sec2}
\frac{R_1}{R_2}\dequ \frac{\beta_1 \chi_{d_1}^2}{\beta_2\chi_{d_2}^2}= \frac{\chi_{d_1}^2/d_1}{\chi_{d_2}^2/d_2}\sim F_{d_1,d_2},
\ee
where $F_{d_1,d_2}$ denotes the usual $F$ distribution with  $d_1$ and $d_2$ degrees of freedom. That is to say, we   approximate the  distribution of $F_{n,p,0}^*$ and hence the distribution of $F_{n,p,0}$  using $F_{d_1,d_2}$ where $d_1$ and $d_2$ are given in (\ref{betdf1.sec2}) and (\ref{betdf2.sec2}), respectively. Let $\hd_1$ and $\hd_2$ be the ratio-consistent estimators of $d_1$ and $d_2$. Then for any nominal significance level $\alpha>0$, the proposed $F$-type test can be conducted via using the critical value $F_{\hd_1,\hd_2}(\alpha)$ or the $p$-value $\Pr(F_{\hd_1,\hd_2}\geq F_{n,p})$ where  $F_{v_1,v_2}(\alpha)$ denotes the upper $100\alpha$ percentile of $F_{v_1,v_2}$.
\begin{remark}\label{FL2com.rem}  By (\ref{Fapprox.sec2}), it is clear that when $d_2\to\infty$, we have  $R_1/R_2\convL\chi_{d_1}^2/d_1$,  showing that when  $d_2$ is large,  our $F$-approximation to the distribution of $F_{n,p,0}^*$ is comparable with  the W--S $\chi^2$-approximation to the distribution
	of $T_{n,p,0}^*$ as  described  in \cite{ZhangTSBF2020}. Since $n_1, n_2<n$, by (\ref{betdf2.sec2}), we have
\[
	d_2\geq \frac{n\tr^2(\bOmega_n)}{(n_2/n)^2\tr(\bSigma_1^2)+(n_1/n)^2\tr(\bSigma_2^2)}\geq n.
	\]
	It follows that as $n\to\infty$, we generally have $d_2\to\infty$. That is to say, for large sample cases, in terms of accuracy,  our $F$-type test is comparable with the $L^2$-norm based test of \cite{ZhangTSBF2020}. However, when $d_2$ is small, it is not the case   and in this case, our $F$-type test is expected to outperform  the $L^2$-norm based test of \cite{ZhangTSBF2020} since
	we take the variation of $T_{n,p}$ into account in the construction of our $F$-type test (\ref{Fnp.equ}). This is actually confirmed by the simulation results presented in Tables~\ref{size1.tab} and ~\ref{size2.tab} of Section~\ref{simu.sec}.
\end{remark}

\begin{remark}\label{Fapprox.rem} It is of interest to see if the $F$-approximation  $F_{d_1,d_2}$ can automatically mimic  the  distribution shape of the $F$-type mixture $F_{n,p,0}^*$.  
	Under Condition C3,  as $n\to\infty$, by (\ref{3cFstarlimit.equ}), the skewness of $F_{n,p,0}^*$ is  given by
	\be\label{skewFnp0.equ}
	\frac{\calK_3(F_{n,p,0}^*)}{\calK_2^{3/2}(F_{n,p,0}^*)}=(8/d^*)^{1/2}[1+o(1)], \mbox{ where } d^*=\frac{\tr^3(\bOmega_n^2)}{\tr^2(\bOmega_n^3)}.
	\ee
	By Theorem 5 of \cite{zhangguozhoucheng2020JASA} and Remark 1 of \cite{ZhangTSBF2020}, we have $1\leq d^*\leq d_1 \leq d_2$. By (\ref{skewFnp0.equ}), when $F_{n,p,0}^*$ is asymptotically normal, we have $d^*,d_1, d_2\to\infty$, and hence $R_1/R_2$ is also asymptotically normal. However,  when $d_1$ is asymptotically bounded, so is $d^*$, and hence both $F_{n,p,0}^*$ and $R_1/R_2$ will not be asymptotically normal. Therefore, this $F$-approximation is also adaptive to the underlying distribution shape of $F_{n,p,0}^*$.
\end{remark}

We now study to find the ratio-consistent estimators of the approximation degrees of freedom  $d_1$ and $d_2$. By (\ref{betdf1.sec2}) and (\ref{betdf2.sec2}), it is sufficient  to find the ratio-consistent estimators of  $\tr^2(\bOmega_n)$, $\tr(\bOmega_n^2)$,
and  $(n_2/n)^{2}(n_1-1)^{-1}\tr(\bSigma_1^2)+(n_1/n)^{2}(n_2-1)^{-1}\tr(\bSigma_2^2)$.
Using the usual sample covariance matrices as defined in (\ref{hSigma.sec1}), following \cite{ZhangTSBF2020}, we estimate $\tr^2(\bSigma_i)$ and $\tr(\bSigma_i^2),i=1,2$ by
\[
\begin{split}
	\widehat{\tr^2(\bSigma_i)}&=\frac{(n_i-1)n_i}{(n_i-2)(n_i+1)}\left[\tr^2(\hbSigma_i)-\frac{2}{n_i}\tr(\hbSigma_i^2)\right],\;\; \mbox{and}\\
	\widehat{\tr(\bSigma_i^2)}&=\frac{(n_i-1)^2}{(n_i-2)(n_i+1)}\left[\tr(\hbSigma_i^2)-\frac{1}{n_i-1}\tr^2(\hbSigma_i)\right],i=1,2.
\end{split}
\]
The estimators of $d_1$ and $d_2$ are then given by
\be\label{hd12.equ}
	\hd_1=\widehat{\tr^2(\bOmega_n)}/\widehat{\tr(\bOmega_n^2)},\;\mbox{ and }\;
	\hd_2=\frac{\widehat{\tr^2(\bOmega_n)}}{\frac{n_2^2}{n^2(n_1-1)}\widehat{\tr(\bSigma_1^2)}+\frac{n_1^2}{n^2(n_2-1)}\widehat{\tr(\bSigma_2^2)}},
\ee
where
\[
\begin{split}
	\widehat{\tr^2(\bOmega_n)}&=\frac{n_2^2}{n^2}\widehat{\tr^2(\bSigma_1)}+\frac{2n_1n_2}{n^2}\tr(\hbSigma_1)\tr(\hbSigma_2)+\frac{n_1^2}{n^2}\widehat{\tr^2(\bSigma_2)},\mbox{ and }\\
	\widehat{\tr(\bOmega_n^2)}&=\frac{n_2^2}{n^2}\widehat{\tr(\bSigma_1^2)}+\frac{2n_1n_2}{n^2}\tr(\hbSigma_1\hbSigma_2)+\frac{n_1^2}{n^2}\widehat{\tr(\bSigma_2^2)}.
\end{split}
\]
Under Conditions C1--C3, as $n\to\infty$, we can show that  $\hd_1/d_1\convP 1$ and $\hd_2/d_2\convP 1$ uniformly for all $p$, where $\convP 1$ means convergence in probability. Hence $F_{\hd_1,\hd_2}/F_{d_1,d_2}\convP 1$ uniformly for all $p$. Theorem~\ref{limit.thm} still holds when $d_1$ and $d_2$ are replaced by  $\hd_1$ and $\hd_2$ as defined in (\ref{hd12.equ}).

\subsection{Asymptotic power}

Following  \cite{chen2010two},  we derive  the asymptotic  power of the proposed  $F$-type  test under the following local alternative:
\begin{equation}\label{H1.equ}
	(\bmu_1-\bmu_2)^\top\bSigma_i(\bmu_1-\bmu_2)=o[n^{-1}\tr(\bSigma_1+\bSigma_2)^2],\;  i=1,2, \mbox{ as } n,p\rightarrow\infty.
\end{equation}
This implies  that we have
\[
\begin{split}
&\quad\Var\left[(n_1n_2/n)^{1/2}(\barbx_1-\barbx_2)^\top(\bmu_1-\bmu_2)\right]\\
&=(\bmu_1-\bmu_2)^\top\bOmega_n(\bmu_1-\bmu_2)=o[\tr(\bOmega_n^2)], \;\mbox{ as }n,p\to\infty.
\end{split}
\]
Therefore, we can write $T_{n,p}=[T_{n,p,0}+\frac{n_1n_2}{n}\|\bmu_1-\bmu_2\|^2][1+o_p(1)]$. Thus,   $F_{n,p}=[F_{n,p,0}+\frac{n_1n_2}{n}\|\bmu_1-\bmu_2\|^2/\tr(\hbOmega_n)][1+o_p(1)]$ under the local alternative (\ref{H1.equ}).
\begin{theorem}\label{power.thm}
	Assume that as $n,p\to\infty$, $\hd_1$ and $\hd_2$ are ratio-consistent for $d_1$ and $d_2$. \\
	(a) Under Conditions C1--C4 and the local alternative (\ref{H1.equ}), as $n,p\to\infty$, we have
	\[
	\Pr\left[F_{n,p}\geq F_{\hd_1,\hd_2}(\alpha)\right]=\Pr\left\{\zeta\geq \frac{F_{d_1, d_2}(\alpha)-1}{\sqrt{2/d_1}}-\frac{n\tau(1-\tau)\|\bmu_1-\bmu_2\|^2}{\left[2\tr(\bOmega^2)\right]^{1/2}}\right\}[1+o(1)],
	\]
	where $\zeta$ is defined in Theorem~\ref{limit.thm}(a), $\bOmega$ is defined in (\ref{omega.equ}) and $\tau$ is given in Condition C3.\\
	(b) Under Conditions C1--C3, C5 and the local alternative (\ref{H1.equ}), as $n,p\to\infty$, we have
	\[
	\Pr\left[F_{n,p}\geq F_{\hd_1,\hd_2}(\alpha)\right]
	=\Phi\left\{-z_{\alpha}+\frac{n\tau(1-\tau)\|\bmu_1-\bmu_2\|^2}{\left[2\tr(\bOmega^2)\right]^{1/2}}\right\}[1+o(1)],
	\]
	where $z_\alpha$ denotes the upper $100\alpha$-percentile of $\calN(0,1)$.
\end{theorem}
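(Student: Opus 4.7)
The plan is to reduce the power calculation to an application of Theorem~\ref{limit.thm} by isolating a deterministic non-centrality shift. Starting from the decomposition stated just before the theorem,
\[
F_{n,p}=\left[F_{n,p,0}+\frac{n_1n_2}{n}\frac{\|\bmu_1-\bmu_2\|^{2}}{\tr(\hbOmega_n)}\right][1+o_p(1)],
\]
which is valid under the local alternative (\ref{H1.equ}) because the cross-term $(n_1n_2/n)^{1/2}(\barbx_1-\barbx_2)^{\top}(\bmu_1-\bmu_2)$ is shown to be $o_p[\tr(\bOmega_n^{2})^{1/2}]$ there. The first step is to rewrite the event $\{F_{n,p}\ge F_{\hd_1,\hd_2}(\alpha)\}$ on the same centered/scaled scale as $\tF_{n,p,0}$, namely dividing by $\sqrt{2\tr(\bOmega_n^{2})/\tr^{2}(\bOmega_n)}=\sqrt{2/d_1}$.

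Second, I would handle the non-centrality term. Using the ratio-consistency $\tr(\hbOmega_n)/\tr(\bOmega_n)\to_P 1$, Condition C3 giving $n_1n_2/n^{2}\to\tau(1-\tau)$, and $\bOmega_n\to\bOmega$ so $\tr(\bOmega_n^{2})=\tr(\bOmega^{2})[1+o(1)]$, the shift is
\[
\Delta_n:=\frac{n_1n_2\|\bmu_1-\bmu_2\|^{2}/n}{\tr(\hbOmega_n)\sqrt{2/d_1}}
=\frac{n\tau(1-\tau)\|\bmu_1-\bmu_2\|^{2}}{[2\tr(\bOmega^{2})]^{1/2}}[1+o_p(1)].
\]
Under the local alternative, the condition (\ref{H1.equ}) bounds $\Delta_n$ in a way that keeps it compatible with the asymptotic regime used in Theorem~\ref{limit.thm}, so the factor $[1+o_p(1)]$ from the decomposition can be absorbed without affecting the limit.

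Third, I would deal with the critical value. Since $\hd_1,\hd_2$ are ratio-consistent for $d_1,d_2$, continuity of the $F$-quantile function gives $F_{\hd_1,\hd_2}(\alpha)=F_{d_1,d_2}(\alpha)[1+o_p(1)]$, so the standardized critical value converges to $c_n:=[F_{d_1,d_2}(\alpha)-1]/\sqrt{2/d_1}$. For part (a), this is the constant appearing in the statement and we are done after invoking Theorem~\ref{limit.thm}(a) to get $\tF_{n,p,0}\convL\zeta$, yielding
\[
\Pr[F_{n,p}\ge F_{\hd_1,\hd_2}(\alpha)]=\Pr[\tF_{n,p,0}\ge c_n-\Delta_n][1+o(1)]=\Pr\{\zeta\ge c_n-\Delta_n\}[1+o(1)].
\]
For part (b), under C5 the degrees of freedom $d_1,d_2\to\infty$ (as discussed in Remark~\ref{Fapprox.rem} via $d^*\le d_1\le d_2$ with $d^*\to\infty$ in the normal regime), so a standard computation using the representation $F_{d_1,d_2}\dequ(\chi^{2}_{d_1}/d_1)/(\chi^{2}_{d_2}/d_2)$ and the CLT gives $c_n=[F_{d_1,d_2}(\alpha)-1]/\sqrt{2/d_1}\to z_\alpha$; combined with Theorem~\ref{limit.thm}(b) ($\tF_{n,p,0}\convL\calN(0,1)$) and Slutsky, this produces the stated $\Phi\{-z_\alpha+\cdots\}$ expression.

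The main obstacle is justifying that the multiplicative $[1+o_p(1)]$ error in the decomposition of $F_{n,p}$ does not perturb the probability in the regime where the non-centrality $\Delta_n$ can be of the same order as $c_n$. The cleanest way is to verify, using (\ref{H1.equ}) and the expressions (\ref{EVA.equ})--(\ref{EVB.equ}) for the cumulants, that both $F_{n,p,0}$ (which is $O_p(1)$ on the standardized scale after centering) and $(n_1n_2/n)\|\bmu_1-\bmu_2\|^{2}/\tr(\hbOmega_n)$ are $o_p(\sqrt{d_1/2})$ times manageable factors, so the $o_p(1)$ multiplier contributes only a vanishing perturbation to the standardized event boundary. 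Once this technicality is controlled, the rest of the argument is the Slutsky-type bookkeeping described above.
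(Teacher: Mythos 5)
Your proposal is correct and follows essentially the same route as the paper's own proof: decompose $F_{n,p}$ as $F_{n,p,0}$ plus the non-centrality shift $n_1n_2n^{-1}\|\bmu_1-\bmu_2\|^2/\tr(\hbOmega_n)$, standardize by $\sqrt{2/d_1}$, apply Theorem~\ref{limit.thm}(a) or (b) to the standardized $F_{n,p,0}$, and replace $F_{\hd_1,\hd_2}(\alpha)$ by $F_{d_1,d_2}(\alpha)$ (and, in part (b), by $z_\alpha$ via $d_2\to\infty$ as in Remark~\ref{Fapprox.rem}) using ratio-consistency. The only difference is that you flag the absorption of the multiplicative $[1+o_p(1)]$ factor as a technicality to be checked, whereas the paper simply folds it into the $[1+o(1)]$ multiplier on the probability.
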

\begin{remark}
	Under the conditions of Theorem~\ref{power.thm}(b), the asymptotic power of $F_{n,p}$ is the same as those  of the $L^2$-norm based tests proposed and studied by \cite{ZhangTSBF2020} and \cite{chen2010two}. However, this is not the case under the conditions of Theorem~\ref{power.thm}(a).
\end{remark}

\section{Simulation Studies}\label{simu.sec}

In this section, we  present two simulation studies to compare the numerical performance of  our  $F$-type test  $F_{n,p}$ (\ref{Fnp.equ})  against two existing tests for the two-sample Behrens--Fisher (BF) problem (\ref{H0.equ}) for  high-dimensional data, including  the $L^2$-norm based   tests  of \cite{chen2010two} and  \cite{ZhangTSBF2020},  denoted as $T_{CQ}$ and $T_{n,p}$,
respectively. They are defined in (\ref{CQstat}) and  (\ref{Tstat.sec1}) respectively. \cite{chen2010two}'s test approximated its  null distribution using the normal approximation while \cite{ZhangTSBF2020}'s test approximated its null distribution using the well-known W--S $\chi^2$-approximation.  Throughout this section, the nominal size $\alpha$ is set as $5\%$ and the number of simulation runs is $N=10,000$.

The simulation process may be briefly described as follows. In each simulation run, the two samples (\ref{2samp.equ}) are generated from the factor model under Condition C1,  and then  the test statistics and $p$-values of the three considered tests are computed. The null hypothesis is rejected  when the $p$-values are smaller than the nominal size $\alpha$ and the proportions of the number of rejections out of $N$ runs are the empirical sizes (under the null hypothesis) or the empirical powers (under the alternative hypothesis). To measure  the overall performance of a test in maintaining the nominal size, the value of average relative error (ARE)  defined in \cite{zhang2011tech} is used.  The ARE value of a test is defined as $\ARE=100M^{-1}\sum_{j=1}^M\vert\halpha_j-\alpha\vert/\alpha$, where $\halpha_j,j=1,\ldots,M$ denote the empirical sizes under $M$ simulation settings.  A test with a smaller ARE value has a better overall performance in terms of maintaining the nominal size.
\subsection{Simulation 1}
In this simulation, we generate the two independent samples  from the following factor model:
\[
\by_{ij} =\bmu_i+ \bSigma_i^{1/2}\bz_{ij},j=1,...,n_i; \;i=1,2,
\]
where $\bz_{ij}=(z_{ij1},\ldots,z_{ijp})^\top, j=1,\ldots, n_i; i=1,2$ are i.i.d.  random variables with\ $\E(\bz_{ij})=\0$ and $\Cov(\bz_{ij})=\bI_p$. Without loss of generality, we specify $\bmu_1=\0$. For the null hypothesis, we set $\bmu_1=\bmu_2=\0$. For power consideration, we set $\bmu_2=\bmu_1+\delta\bh $. Hence  the mean vector difference $\bmu_1-\bmu_2$ is controlled by  the tuning parameters $\delta$ and $\bh$. For simplicity, we set $\bh=\bu/\|\bu\|$ with $\bu=[1,\ldots,p]^\top$ for any given dimension $p$. Here $\bh$  specifies the direction of the mean difference $\bmu_1-\bmu_2$. The tuning parameter $\delta$ controls the amount of the mean vector difference. It is clear that
when $\delta$ increases, the powers of the tests under consideration  are
expected to increase, and we shall let $\delta$ take
values in $[0,\delta_0]$ with $\delta_0$ properly chosen so that the
associated powers of the tests are properly scattered over  $[0,1]$.  The covariance matrices $\bSigma_1$ and $\bSigma_2$ are specified as $\bSigma_i=\sigma^2_i\left[(1-\rho_i)\bI_p+\rho_i\bJ_p\right],\; i=1,2$, where $\bJ_p$ is the $p\times p$ matrix of ones. Note that the covariance matrix difference $\bSigma_1-\bSigma_2$ is controlled by  the tuning parameters  $\sigma^2_i$ and $\rho_i,i=1,2$.  In particular,  $\sigma^2_1$ and  $\sigma^2_2$ control  the variance of the data, and we set $\sigma_1^2=1$ and $\sigma_2^2=2$ for simplicity. The tuning parameters  $\rho_1$ and $\rho_2$ control the correlation of the data, and we set  $\rho_1=0.1$ and consider three cases of $\rho_2$  as  $\rho_2=0.1,0.5$, and $0.9$. To compare  the performance of the considered tests on the data with different distribution shapes, we consider  to independently generate the $p$-vectors $\bz_{ij}$ using  the following three models: (1) $\bz_{ij}\iidsim \calN(0,1)$; (2)  $\bz_{ij}\iidsim t_4/\sqrt{2}$; (3) $\bz_{ij}\iidsim (\chi_1^2-1)/\sqrt{2}$,   representing  three distribution shapes: (1) normal; (2) non-normal but symmetric; (3) non-normal and skewed. Lastly, for the sample sizes $\bn=[n_1,n_2]$ and dimension $p$, we consider three cases of  $p=50,500,1000$ and three cases of   $\bn$ as  $\bn_1=[30,50], \bn_2=[120,200]$ and $\bn_3=[240,400]$. The numerical results on these three considered tests in Simulation 1 are summarized in Tables~\ref{size1.tab}$\sim$\ref{power1.tab}.
\begin{table}[!h]
	\begin{center}
		\begin{minipage}{0.8\textwidth}
			\caption{Simulation 1: Empirical sizes (in $\%$).}  \label{size1.tab}	
			\begin{tabular*}{\textwidth}{@{\extracolsep{\fill}}p{0.02\textwidth}p{0.01\textwidth}p{0.01\textwidth}p{0.01\textwidth}p{0.01\textwidth}p{0.01\textwidth}p{0.01\textwidth}p{0.01\textwidth}p{0.01\textwidth}p{0.01\textwidth}p{0.01\textwidth}p{0.01\textwidth}p{0.01\textwidth}@{\extracolsep{\fill}}}
				\hline
				&       &       & \multicolumn{3}{c}{$\rho_2=0.1$} & \multicolumn{3}{c}{$\rho_2=0.5$} & \multicolumn{3}{c}{$\rho_2=0.9$} \\
			\hline
				Model & $p$     & $\bn$ & $T_{CQ}$    & $T_{n,p}$   & $F_{n,p}$ & $T_{CQ}$    & $T_{n,p}$   & $F_{n,p}$ & $T_{CQ}$    & $T_{n,p}$   & $F_{n,p}$ \\
			\hline
				\multirow{9}[0]{*}{1} & \multirow{3}[0]{*}{$50$} & $\bn_1$ & 6.63  & 5.65  & 5.47  & 7.10  & 5.91  & 5.71  & 7.11  & 5.78  & 5.36 \\
				&       & $\bn_2$ & 6.37  & 5.44  & 5.35  & 6.67  & 5.48  & 5.42  & 6.80  & 5.26  & 5.15 \\
				&       & $\bn_3$ & 5.99  & 5.27  & 5.27  & 6.61  & 5.39  & 5.32  & 6.93  & 5.28  & 5.26 \\
				& \multirow{3}[0]{*}{$500$} & $\bn_1$ & 7.17  & 6.80  & 6.73  & 6.99  & 6.11  & 6.01  & 7.11  & 5.72  & 5.36 \\
				&       & $\bn_2$ & 6.37  & 5.89  & 5.87  & 7.04  & 6.03  & 5.97  & 7.31  & 5.52  & 5.39 \\
				&       & $\bn_3$ & 6.46  & 6.01  & 6.00  & 6.16  & 5.22  & 5.22  & 7.09  & 5.43  & 5.38 \\
				& \multirow{3}[0]{*}{$1000$} & $\bn_1$ & 7.43  & 7.09  & 6.98  & 7.32  & 6.32  & 6.06  & 7.16  & 5.80  & 5.39 \\
				&       & $\bn_2$ & 6.60  & 6.17  & 6.13  & 6.81  & 5.78  & 5.70  & 6.77  & 5.46  & 5.40 \\
				&       &$\bn_3$ & 6.35  & 5.87  & 5.87  & 7.05  & 5.87  & 5.84  & 7.35  & 5.51  & 5.45 \\
			\hline
				\multirow{9}[0]{*}{2} & \multirow{3}[0]{*}{$50$} & $\bn_1$ & 6.15  & 4.71  & 4.54  & 7.01  & 5.93  & 5.73  & 6.94  & 5.68  & 5.33 \\
				&       & $\bn_2$  & 6.35  & 5.16  & 5.12  & 6.68  & 5.39  & 5.35  & 7.00  & 5.35  & 5.25 \\
				&       & $\bn_3$ & 6.07  & 5.19  & 5.16  & 6.97  & 5.71  & 5.68  & 7.06  & 5.53  & 5.48 \\
				& \multirow{3}[0]{*}{$500$} & $\bn_1$ & 6.27  & 5.80  & 5.74  & 7.44  & 6.40  & 6.16  & 7.12  & 5.80  & 5.46 \\
				&       &$\bn_2$ & 6.71  & 6.23  & 6.21  & 7.28  & 6.11  & 6.03  & 7.51  & 5.68  & 5.58 \\
				&       & $\bn_3$  & 6.37  & 5.85  & 5.84  & 6.92  & 5.83  & 5.78  & 6.74  & 5.39  & 5.34 \\
				& \multirow{3}[0]{*}{$1000$} & $\bn_1$ & 7.28  & 6.75  & 6.64  & 7.10  & 6.02  & 5.86  & 6.92  & 5.61  & 5.12 \\
				&       &$\bn_2$ & 7.03  & 6.51  & 6.48  & 7.07  & 5.89  & 5.80  & 7.58  & 5.94  & 5.86 \\
				&       & $\bn_3$ & 6.40  & 5.91  & 5.91  & 6.75  & 5.66  & 5.63  & 6.88  & 5.35  & 5.29 \\
			\hline
				\multirow{9}[0]{*}{3} & \multirow{3}[0]{*}{$50$} & $\bn_1$& 6.57  & 5.07  & 4.84  & 7.03  & 5.73  & 5.49  & 6.84  & 5.48  & 5.06 \\
				&       & $\bn_2$ & 6.37  & 5.20  & 5.17  & 6.53  & 5.08  & 5.02  & 6.70  & 5.17  & 5.09 \\
				&       &$\bn_3$& 6.34  & 5.39  & 5.36  & 6.82  & 5.65  & 5.60  & 7.25  & 5.49  & 5.47 \\
				& \multirow{3}[0]{*}{$500$} & $\bn_1$ & 6.77  & 6.13  & 6.02  & 6.95  & 6.11  & 5.97  & 7.36  & 6.00  & 5.58 \\
				&       &$\bn_2$ & 6.60  & 6.12  & 6.09  & 6.86  & 5.65  & 5.61  & 7.37  & 5.70  & 5.61 \\
				&       & $\bn_3$ & 6.69  & 6.24  & 6.23  & 6.84  & 5.72  & 5.71  & 6.93  & 5.17  & 5.14 \\
				& \multirow{3}[0]{*}{$1000$} & $\bn_1$ & 7.01  & 6.71  & 6.63  & 7.64  & 6.77  & 6.52  & 7.07  & 5.82  & 5.56 \\
				&       & $\bn_2$ & 6.72  & 6.16  & 6.13  & 6.93  & 5.74  & 5.67  & 6.88  & 5.44  & 5.34 \\
				&       & $\bn_3$ & 6.67  & 6.24  & 6.22  & 6.97  & 5.94  & 5.94  & 6.89  & 5.32  & 5.28 \\
			\hline
				\multicolumn{3}{c}{ARE} & 31.66 & 18.62 & 17.96 & 38.92 & 16.62 & 14.67 & 41.24 & 10.87 & 7.39 \\
			\hline
			\end{tabular*}
		\end{minipage}
	\end{center}
\end{table}

\begin{table}[!h]
	\begin{center}
		\begin{minipage}{0.8\textwidth}
			\caption{Simulation 1: Values of $\hd_1$ and $\hd_2$.}  \label{para1.tab}	
			\begin{tabular*}{\textwidth}{@{\extracolsep{\fill}}p{0.02\textwidth}p{0.01\textwidth}p{0.01\textwidth}p{0.01\textwidth}p{0.01\textwidth}p{0.01\textwidth}p{0.01\textwidth}p{0.01\textwidth}p{0.01\textwidth}@{\extracolsep{\fill}}}
				\hline
				&       &       &        \multicolumn{2}{c}{$\rho_2=0.1$} & \multicolumn{2}{c}{$\rho_2=0.5$} & \multicolumn{2}{c}{$\rho_2=0.9$} \\
				\hline
		Model	& $p$     & $\bn$ & $\hd_1$    & $\hd_2$    & $\hd_1$    & $\hd_2$   & $\hd_1$    & $\hd_2$     \\
			\hline
					\multirow{9}[0]{*}{1} & \multirow{3}[0]{*}{$50$} & $\bn_1$ & 34    & 2590  & 9     & 595   & 4     & 208 \\
					&       & $\bn_2$ & 34    & 10428 & 9     & 2279  & 3     & 804 \\
					&       & $\bn_3$ & 34    & 20885 & 8     & 4521  & 3     & 1594 \\
					& \multirow{3}[0]{*}{$500$} & $\bn_1$ & 90    & 6830  & 11    & 689   & 4     & 216 \\
					&       & $\bn_2$ & 85    & 26306 & 10    & 2577  & 4     & 828 \\
					&       & $\bn_3$ & 84    & 52350 & 10    & 5123  & 3     & 1642 \\
					& \multirow{3}[0]{*}{$1000$} & $\bn_1$ & 99    & 7534  & 11    & 693   & 4     & 218 \\
					&       & $\bn_2$ & 93    & 28773 & 10    & 2611  & 4     & 830 \\
					&       & $\bn_3$ & 92    & 57116 & 10    & 5152  & 3     & 1644 \\
					\hline
					\multirow{9}[0]{*}{2} & \multirow{3}[0]{*}{$50$} & $\bn_1$ & 31    & 2230  & 9     & 571   & 4     & 207 \\
					&       & $\bn_2$ & 33    & 9851  & 8     & 2248  & 3     & 800 \\
					&       & $\bn_3$ & 33    & 20217 & 8     & 4490  & 3     & 1592 \\
					& \multirow{3}[0]{*}{$500$} & $\bn_1$ & 87    & 6382  & 10    & 680   & 4     & 216 \\
					&       & $\bn_2$ & 84    & 25767 & 10    & 2581  & 4     & 828 \\
					&       & $\bn_3$ & 84    & 51719 & 10    & 5113  & 3     & 1642 \\
					& \multirow{3}[0]{*}{$1000$} & $\bn_1$ & 96    & 7207  & 11    & 688   & 4     & 217 \\
					&       & $\bn_2$ & 92    & 28449 & 10    & 2602  & 4     & 829 \\
					&       & $\bn_3$ & 92    & 56713 & 10    & 5162  & 3     & 1646 \\
					\hline
					\multirow{9}[0]{*}{3} & \multirow{3}[0]{*}{$50$} & $\bn_1$ & 31    & 2177  & 9     & 572   & 3     & 206 \\
					&       & $\bn_2$ & 33    & 9907  & 8     & 2260  & 3     & 801 \\
					&       & $\bn_3$ & 33    & 20341 & 8     & 4502  & 3     & 1593 \\
					& \multirow{3}[0]{*}{$500$} & $\bn_1$ & 87    & 6479  & 11    & 683   & 4     & 217 \\
					&       & $\bn_2$ & 85    & 26031 & 10    & 2580  & 4     & 829 \\
					&       & $\bn_3$ & 84    & 52043 & 10    & 5121  & 3     & 1642 \\
					& \multirow{3}[0]{*}{$1000$} & $\bn_1$ & 97    & 7321  & 11    & 695   & 4     & 216 \\
					&       & $\bn_2$ & 93    & 28596 & 10    & 2602  & 4     & 829 \\
					&       & $\bn_3$ & 92    & 57015 & 10    & 5158  & 3     & 1645 \\
			\hline
		\end{tabular*}
	\end{minipage}
\end{center}
\end{table}
Table~\ref{size1.tab} displays the empirical sizes of $T_{CQ}$, $T_{n,p}$, and $F_{n,p}$ with the last row being their ARE values associated with the three values of $\rho_2$. We can draw several interesting conclusions in terms of size control. Firstly, the empirical sizes of $F_{n,p}$ are generally around $5\%$ under Models 1, 2, and 3, showing that the proposed $F$-type test  also  works well for the simulated  non-normal data.  Secondly,  $F_{n,p}$ always outperforms  $T_{n,p}$ since  for $\rho_2=0.1,0.5$, and $0.9$,  the ARE values of $F_{n,p}$ are $17.96$, $14.67$, and $7.39$, respectively   while  these values for $T_{n,p}$ are $18.62$, $16.62$, and $10.87$, respectively. This result is also confirmed  by the empirical size ranges of $F_{n,p}$ and $T_{n,p}$.  The empirical sizes of $F_{n,p}$ range from $4.54\%$ to $6.98\%$ while the empirical sizes of $T_{n,p}$  range from $4.71\%$ to $7.09\%$. Thirdly, with increasing the total sample size $n=n_1+n_2$,  the performances of  $F_{n,p}$ and $T_{n,p}$ become closer and closer. This is consistent with Remark~\ref{FL2com.rem} presented in Section~\ref{main.sec}.  Lastly, $T_{n,p}$ and $F_{n,p}$ generally outperform $T_{CQ}$  since  $T_{CQ}$ always has  the largest ARE values under the three settings associated with  $\rho_2=0.1,0.5$, and $0.9$.  Therefore,
in terms of size control, our $F$-type test $F_{n,p}$  indeed  improves the $L^2$-norm based tests $T_{CQ}$ and $T_{n,p}$ proposed by \cite{chen2010two} and  \cite{ZhangTSBF2020}, respectively.

In order to partially explain,  in terms of size control,  why $T_{CQ}$  does not perform well,  why $F_{n,p}$ and $T_{n,p}$ outperform $T_{CQ}$ generally,   and why $F_{n,p}$ always   outperforms  $T_{n,p}$ but with increasing the total sample size $n$, their performances are getting closer and closer, the values of $\hd_1$ and  $\hd_2$  are presented in Table~\ref{para1.tab}. Firstly, it is seen that the values of  $\hd_1$ are  generally not very large when $\rho_2=0.1$ and they are quite small when $\rho_2=0.5$ and $0.9$. This means that the normal approximation to the null distribution of $T_{CQ}$ will be less adequate or not adequate at all. However, the W--S $\chi^2$-approximation and the  $F$-approximation to the null distribution of $T_{n,p}$ and $F_{n,p}$ are not affected too much. This partially  explains  why $T_{CQ}$  does not perform well when $\rho_2=0.5$ and $0.9$ and  why $F_{n,p}$ and $T_{n,p}$ outperform $T_{CQ}$ generally. Secondly, it is seen that the values of $\hd_2$ are generally larger than the total sample size $n$ and are generally quite large. This partially explains why $T_{n,p}$ still performs quite well under each setting. Further,  with increasing the total sample size $n$,  the values of $\hd_2$ are also increasing, so that, as seen from Table~\ref{size1.tab}, in terms of size control, the performances of $T_{n,p}$ and $F_{n,p}$ are getting closer and closer. These results are consistent with the conclusions drawn in Remark~\ref{Fapprox.rem}.

\begin{table}[!h]
	\begin{center}
		\begin{minipage}{0.8\textwidth}
			\caption{Simulation 1: Empirical powers (in $\%$).}  \label{power1.tab}	
			\begin{tabular*}{\textwidth}{@{\extracolsep{\fill}}p{0.6cm}p{0.4cm}p{0.5cm}p{0.5cm}p{0.5cm}p{0.5cm}p{0.5cm}p{0.5cm}p{0.5cm}p{0.5cm}p{0.5cm}p{0.5cm}p{0.5cm}}

			\hline
			\multicolumn{4}{c}{}& \multicolumn{3}{c}{$\rho_2=0.1$} & \multicolumn{3}{c}{$\rho_2=0.5$} & \multicolumn{3}{c}{$\rho_2=0.9$} \\
			\hline
			Model	& $p$     & $\bn$ &$\delta$ & $T_{CQ}$    & $T_{n,p}$   & $F_{n,p}$ & $T_{CQ}$    & $T_{n,p}$   & $F_{n,p}$ & $T_{CQ}$    & $T_{n,p}$   & $F_{n,p}$ \\
			\hline
				\multirow{9}[0]{*}{1} & \multirow{3}[0]{*}{$50$} & $\bn_1$ & 1.5  & 59.63 & 57.25 & 56.88 & 31.01 & 28.18 & 27.49 & 21.35 & 18.66 & 17.72 \\
				&       & $\bn_2$ & 0.7  & 52.95 & 50.46 & 50.31 & 26.84 & 24.02 & 23.91 & 18.77 & 15.87 & 15.66 \\
				&       & $\bn_3$ & 0.5  & 54.16 & 51.36 & 51.31 & 27.40 & 24.61 & 24.52 & 19.15 & 15.97 & 15.85 \\
				& \multirow{3}[0]{*}{$500$} & $\bn_1$& 4.0  & 59.58 & 58.35 & 58.07 & 24.38 & 22.12 & 21.54 & 17.57 & 14.98 & 14.30 \\
				&       & $\bn_2$ & 2.0  & 59.17 & 57.81 & 57.76 & 24.36 & 21.79 & 21.72 & 16.85 & 13.99 & 13.80 \\
				&       & $\bn_3$& 1.3  & 51.97 & 50.31 & 50.28 & 21.42 & 18.95 & 18.89 & 15.04 & 12.31 & 12.21 \\
				& \multirow{3}[0]{*}{$1000$} & $\bn_1$& 5.4 & 56.59 & 55.34 & 54.99 & 23.43 & 21.23 & 20.77 & 16.49 & 13.92 & 13.16 \\
				&       & $\bn_2$ & 2.5  & 50.24 & 49.06 & 48.98 & 20.15 & 17.92 & 17.79 & 14.47 & 11.87 & 11.67 \\
				&       & $\bn_3$ & 1.9  & 55.64 & 54.09 & 54.05 & 22.41 & 19.93 & 19.87 & 15.46 & 12.70 & 12.61 \\
			\hline
				\multirow{9}[0]{*}{2} & \multirow{3}[0]{*}{$50$} & $\bn_1$ & 1.5  & 59.48 & 55.23 & 54.58 & 30.75 & 27.54 & 26.83 & 21.41 & 18.33 & 17.43 \\
				&       &$\bn_2$ & 0.7  & 53.23 & 49.82 & 49.71 & 27.21 & 24.28 & 24.10 & 18.56 & 15.67 & 15.42 \\
				&       & $\bn_3$ & 0.5  & 54.15 & 51.29 & 51.26 & 27.09 & 23.88 & 23.84 & 19.93 & 16.34 & 16.23 \\
				& \multirow{3}[0]{*}{$500$} & $\bn_1$ & 4.0  & 61.07 & 59.13 & 58.78 & 24.75 & 22.29 & 21.68 & 17.24 & 14.39 & 13.67 \\
				&       & $\bn_2$ & 2.0  & 59.68 & 57.92 & 57.86 & 23.86 & 21.46 & 21.31 & 17.57 & 14.83 & 14.61 \\
				&       & $\bn_3$ & 1.3  & 52.16 & 50.52 & 50.47 & 20.55 & 18.48 & 18.40 & 15.19 & 12.39 & 12.30 \\
				& \multirow{3}[0]{*}{$1000$} & $\bn_1$ & 5.4  & 56.84 & 55.35 & 55.00 & 23.33 & 21.38 & 21.06 & 17.14 & 14.55 & 13.93 \\
				&       & $\bn_2$ & 2.5  & 49.48 & 48.06 & 47.96 & 20.45 & 18.16 & 18.02 & 14.18 & 11.86 & 11.75 \\
				&       & $\bn_3$ & 1.9  & 56.65 & 55.10 & 55.05 & 22.86 & 20.35 & 20.27 & 15.41 & 12.88 & 12.79 \\
					\hline
				\multirow{9}[0]{*}{3} & \multirow{3}[0]{*}{$50$} & $\bn_1$& 1.5  & 58.75 & 55.00 & 54.48 & 29.98 & 26.36 & 25.68 & 20.83 & 18.02 & 17.20 \\
				&       &$\bn_2$ & 0.7  & 52.44 & 49.74 & 49.62 & 26.34 & 23.56 & 23.40 & 18.96 & 15.60 & 15.41 \\
				&       & $\bn_3$ & 0.5  & 53.73 & 50.70 & 50.64 & 26.65 & 23.96 & 23.87 & 19.30 & 15.87 & 15.82 \\
				& \multirow{3}[0]{*}{$500$} & $\bn_1$ & 4.0  & 59.59 & 57.99 & 57.59 & 24.76 & 22.57 & 22.16 & 16.87 & 14.38 & 13.71 \\
				&       & $\bn_2$ & 2.0  & 59.79 & 58.28 & 58.19 & 23.57 & 20.78 & 20.59 & 16.88 & 14.15 & 13.84 \\
				&       & $\bn_3$ & 1.3  & 52.73 & 51.15 & 51.11 & 21.17 & 18.74 & 18.68 & 14.85 & 12.09 & 11.98 \\
				& \multirow{3}[0]{*}{$1000$} & $\bn_1$& 5.4  & 57.13 & 55.68 & 55.41 & 22.67 & 20.58 & 20.17 & 16.36 & 14.07 & 13.35 \\
				&       & $\bn_2$ & 2.5  & 50.06 & 48.86 & 48.78 & 19.25 & 17.26 & 17.15 & 14.59 & 12.01 & 11.90 \\
				&       & $\bn_3$ & 1.9  & 56.42 & 54.99 & 54.97 & 23.00 & 20.54 & 20.51 & 16.06 & 13.28 & 13.15 \\
				\hline
			\end{tabular*}
		\end{minipage}
	\end{center}
\end{table}

We now investigate the empirical powers of $T_{CQ}$, $T_{n,p}$, and $F_{n,p}$. Table~\ref{power1.tab} presents their empirical powers obtained in Simulation 1. It is seen that $T_{n,p}$ and $F_{n,p}$ have comparable empirical powers under each setting since, as seen from Table~\ref{size1.tab}, their empirical sizes are generally comparable under each setting. It is also seen that $T_{CQ}$ has slightly higher powers than $T_{n,p}$ and $F_{n,p}$. This is also reasonable since,  as seen from  Table~\ref{size1.tab}, the empirical sizes of $T_{CQ}$ are  generally  liberal and they are generally larger than those of $T_{n,p}$ and $F_{n,p}$.

\subsection{Simulation 2}

In this simulation, we continue to use the setup of Simulation 1 except we now set $\bSigma_i=\bD\bR_i\bD,i=1,2$, where $\bD=\diag(d_1,\ldots,d_p)$ with $d_k=(p-k+1)/p,k=1,\ldots,p$, and $\bR_i=(r_{i,k\ell})$ with $r_{i,k\ell}=(-1)^{k+\ell}\rho_i^{0.1\vert k-\ell\vert},k,\ell=1,\ldots,p;i=1,2.$ In this case, the diagonal elements of $\bSigma_i$ are different and the larger difference between $\rho_1$ and $\rho_2$ will mean  the larger difference between $\bSigma_1$ and $\bSigma_2$.
Note that the tuning parameters $\rho_1$ and $\rho_2$  here play a somewhat different role from that in Simulation 1. In Simulation 1, $\rho_2=0.1,0.5$, and $0.9$ mean that  the simulated data are nearly uncorrelated, moderately correlated, and highly correlated, respectively. However, in this simulation, even when $\rho_2=0.9$, the correlation of the simulated data may still  be very small especially when $p$ is large. Nevertheless, the  values of $\rho_1$ and $\rho_2$ are also strongly related to the correlation of the simulated data.  The larger the value of $\rho_i$ is, the larger correlation among the $p$ variables of the simulated data is. The absolute correlation value of $r_{i,k\ell},i=1,2$ decays as $\vert k-\ell\vert$ increases.
\begin{table}[!h]
	\begin{center}
		\begin{minipage}{0.8\textwidth}
			\caption{Simulation 2: Empirical sizes (in $\%$).}  \label{size2.tab}	
			\begin{tabular*}{\textwidth}{@{\extracolsep{\fill}}p{0.6cm}p{0.4cm}p{0.5cm}p{0.5cm}p{0.5cm}p{0.5cm}p{0.5cm}p{0.5cm}p{0.5cm}p{0.5cm}p{0.5cm}p{0.5cm}}
				\hline
				&       &       & \multicolumn{3}{c}{$\rho_2=0.1$} & \multicolumn{3}{c}{$\rho_2=0.5$} & \multicolumn{3}{c}{$\rho_2=0.9$} \\
				\hline
			Model	& $p$     & $\bn$ & $T_{CQ}$    & $T_{n,p}$   & $F_{n,p}$ & $T_{CQ}$    & $T_{n,p}$   & $F_{n,p}$ & $T_{CQ}$    & $T_{n,p}$   & $F_{n,p}$ \\
				\hline
				\multirow{9}[0]{*}{1} & \multirow{3}[0]{*}{50} & $\bn_1$ & 7.41  & 5.80  & 5.57  & 7.50  & 6.00  & 5.61  & 6.91  & 5.52  & 5.16 \\
				&       & $\bn_2$ & 7.14  & 5.55  & 5.47  & 6.81  & 5.16  & 5.10  & 6.87  & 5.27  & 5.10 \\
				&       & $\bn_3$ & 6.68  & 5.34  & 5.31  & 6.94  & 5.18  & 5.11  & 6.99  & 5.35  & 5.33 \\
				& \multirow{3}[0]{*}{500} & $\bn_1$ & 6.24  & 5.54  & 5.33  & 6.81  & 5.87  & 5.58  & 6.94  & 5.92  & 5.72 \\
				&       & $\bn_2$ & 6.45  & 5.60  & 5.55  & 6.61  & 5.81  & 5.76  & 6.87  & 5.81  & 5.74 \\
				&       & $\bn_3$& 6.29  & 5.50  & 5.48  & 6.41  & 5.55  & 5.51  & 7.08  & 6.09  & 6.08 \\
				& \multirow{3}[0]{*}{1000} & $\bn_1$ & 6.00  & 5.45  & 5.25  & 6.11  & 5.61  & 5.42  & 6.86  & 6.15  & 5.93 \\
				&       &$\bn_2$ & 6.26  & 5.59  & 5.54  & 5.98  & 5.16  & 5.09  & 6.57  & 5.76  & 5.73 \\
				&       & $\bn_3$ & 6.25  & 5.76  & 5.74  & 6.78  & 6.00  & 5.99  & 6.47  & 5.64  & 5.64 \\
					\hline
				\multirow{9}[0]{*}{2} & \multirow{3}[0]{*}{50} & $\bn_1$ & 7.32  & 5.40  & 5.07  & 6.91  & 5.50  & 5.25  & 6.73  & 5.37  & 5.03 \\
				&       & $\bn_2$ & 6.77  & 5.08  & 4.99  & 7.23  & 5.42  & 5.35  & 6.50  & 4.93  & 4.89 \\
				&       & $\bn_3$ & 6.78  & 5.27  & 5.23  & 7.22  & 5.57  & 5.54  & 6.77  & 5.11  & 5.09 \\
				& \multirow{3}[0]{*}{500} & $\bn_1$ & 6.45  & 5.45  & 5.24  & 6.32  & 5.26  & 5.08  & 7.52  & 6.53  & 6.30 \\
				&       & $\bn_2$ & 6.18  & 5.20  & 5.15  & 6.52  & 5.66  & 5.61  & 6.49  & 5.31  & 5.27 \\
				&       &$\bn_3$ & 6.28  & 5.31  & 5.29  & 6.48  & 5.65  & 5.64  & 6.47  & 5.36  & 5.33 \\
				& \multirow{3}[0]{*}{1000} & $\bn_1$ & 5.91  & 5.19  & 5.04  & 6.25  & 5.25  & 5.11  & 6.84  & 6.11  & 5.92 \\
				&       &$\bn_2$ & 5.83  & 5.10  & 5.03  & 6.78  & 6.12  & 6.06  & 6.58  & 5.64  & 5.58 \\
				&       & $\bn_3$ & 5.84  & 5.23  & 5.20  & 6.08  & 5.19  & 5.18  & 6.62  & 5.76  & 5.73 \\
				\hline
				\multirow{9}[0]{*}{3} & \multirow{3}[0]{*}{50} & $\bn_1$ & 7.19  & 5.63  & 5.34  & 6.54  & 4.87  & 4.60  & 7.13  & 5.47  & 5.06 \\
				&       & $\bn_2$ & 7.35  & 5.71  & 5.66  & 6.87  & 5.17  & 5.11  & 6.75  & 5.24  & 5.19 \\
				&       & $\bn_3$ & 6.94  & 5.26  & 5.21  & 6.71  & 5.08  & 5.05  & 6.54  & 5.09  & 5.05 \\
				& \multirow{3}[0]{*}{500} & $\bn_1$ & 6.36  & 5.55  & 5.41  & 6.35  & 5.32  & 5.08  & 6.98  & 5.92  & 5.68 \\
				&       & $\bn_2$ & 6.67  & 5.71  & 5.67  & 6.88  & 5.67  & 5.64  & 6.90  & 5.87  & 5.77 \\
				&       & $\bn_3$ & 6.51  & 5.65  & 5.64  & 6.64  & 5.52  & 5.49  & 6.72  & 5.65  & 5.62 \\
				& \multirow{3}[0]{*}{1000} & $\bn_1$ & 5.92  & 5.21  & 5.05  & 6.72  & 5.75  & 5.64  & 6.70  & 5.81  & 5.62 \\
				&       & $\bn_2$ & 6.22  & 5.60  & 5.54  & 5.82  & 5.12  & 5.03  & 6.51  & 5.62  & 5.57 \\
				&       & $\bn_3$ & 6.11  & 5.43  & 5.42  & 6.30  & 5.51  & 5.50  & 6.62  & 5.68  & 5.64 \\
					\hline
				\multicolumn{3}{c}{ARE} & 20.89 & 8.97 & 6.99  & 32.27 & 9.80 & 8.10  & 35.50 & 12.68 & 10.36 \\
			\hline
			\end{tabular*}
		\end{minipage}
	\end{center}
\end{table}

The empirical sizes of $T_{CQ}$, $T_{n,p}$, and $F_{n,p}$  in Simulation 2 are presented  in Table~\ref{size2.tab}. The last row also displays their ARE values associated with the three values of $\rho_2$. We have similar conclusions to those drawn from Table~\ref{size1.tab}. In terms of size control, $F_{n,p}$  also  works well for the simulated  non-normal data;  $F_{n,p}$ always outperforms  $T_{n,p}$ and with increasing the total sample size, the performances of $F_{n,p}$ and $T_{n,p}$ are getting closer and closer; and both $F_{n,p}$ and $T_{n,p}$  generally outperform $T_{CQ}$.    Table~\ref{para2.tab} lists the values of $\hd_1$ and $\hd_2$ under various configurations in Simulation 2. Firstly, it is again seen that  $\hd_1$ are generally not very large even when $p=500$ and $1000$. This means that the normal approximation to the null distribution of $T_{CQ}$ will be less adequate or not adequate at all. However, the W--S $\chi^2$-approximation and the  $F$-approximation to the null distribution of $T_{n,p}$ and $F_{n,p}$ are not affected too much. This partially  explains   why $F_{n,p}$ and $T_{n,p}$ outperform $T_{CQ}$ generally. Secondly, it is seen that the values of $\hd_2$ are generally larger than the total sample size $n$ and are generally quite large. This partially explains why $T_{n,p}$ still performs quite well under each setting. Further,  with increasing the total sample size $n$,  the values of $\hd_2$ are also increasing, so that, as seen from Table~\ref{size2.tab}, in terms of size control, the performances of $T_{n,p}$ and $F_{n,p}$ are getting closer and closer. These results are consistent with the conclusions drawn in Remark~\ref{Fapprox.rem}. To save space, we do not present the empirical powers of the three tests under consideration since the conclusions drawn from these empirical powers are similar  to those drawn from Table~\ref{power1.tab}.
	\begin{table}[!h]
		\begin{center}
			\begin{minipage}{0.8\textwidth}
			\caption{Simulation 2: Values of $\hd_1$ and $\hd_2$.}  \label{para2.tab}	
				\begin{tabular*}{\textwidth}{@{\extracolsep{\fill}}p{0.02\textwidth}p{0.01\textwidth}p{0.01\textwidth}p{0.01\textwidth}p{0.01\textwidth}p{0.01\textwidth}p{0.01\textwidth}p{0.01\textwidth}p{0.01\textwidth}@{\extracolsep{\fill}}}
					\hline
					&       &       &        \multicolumn{2}{c}{$\rho_2=0.1$} & \multicolumn{2}{c}{$\rho_2=0.5$} & \multicolumn{2}{c}{$\rho_2=0.9$} \\
					\hline
				Model	& $p$     & $\bn$ & $\hd_1$    & $\hd_2$    & $\hd_1$    & $\hd_2$   & $\hd_1$    & $\hd_2$     \\
					\hline
					\multirow{9}[0]{*}{1} & \multirow{3}[0]{*}{50} & $\bn_1$ & 7     & 452   & 5     & 346   & 4     & 243 \\
					&       & $\bn_2$ & 7     & 1783  & 5     & 1388  & 4     & 969 \\
					&       & $\bn_3$ & 7     & 3564  & 5     & 2778  & 4     & 1938 \\
					& \multirow{3}[0]{*}{500} & $\bn_1$ & 64    & 3932  & 41    & 2832  & 18    & 1061 \\
					&       & $\bn_2$ & 64    & 15962 & 41    & 11507 & 17    & 4173 \\
					&       & $\bn_3$ & 64    & 32018 & 41    & 23081 & 17    & 8305 \\
					& \multirow{3}[0]{*}{1000} & $\bn_1$ & 127   & 7792  & 82    & 5586  & 33    & 1910 \\
					&       & $\bn_2$ & 126   & 31722 & 81    & 22733 & 32    & 7582 \\
					&       & $\bn_3$ & 126   & 63649 & 81    & 45634 & 32    & 15132 \\
						\hline
					\multirow{9}[0]{*}{2} & \multirow{3}[0]{*}{50} & $\bn_1$ & 7     & 431   & 5     & 335   & 4     & 235 \\
					&       & $\bn_2$ & 7     & 1756  & 5     & 1371  & 4     & 959 \\
					&       & $\bn_3$ & 7     & 3530  & 5     & 2757  & 4     & 1927 \\
					& \multirow{3}[0]{*}{500} & $\bn_1$ & 61    & 3656  & 40    & 2676  & 18    & 1032 \\
					&       & $\bn_2$ & 63    & 15531 & 41    & 11279 & 17    & 4131 \\
					&       & $\bn_3$ & 63    & 31554 & 41    & 22811 & 17    & 8271 \\
					& \multirow{3}[0]{*}{1000} & $\bn_1$ & 121   & 7180  & 79    & 5256  & 32    & 1865 \\
					&       & $\bn_2$ & 125   & 30831 & 80    & 22264 & 32    & 7514 \\
					&       & $\bn_3$ & 125   & 62646 & 81    & 45073 & 32    & 15048 \\
						\hline
					\multirow{9}[0]{*}{3} & \multirow{3}[0]{*}{50} & $\bn_1$ & 7     & 424   & 5     & 331   & 4     & 233 \\
					&       & $\bn_2$ & 7     & 1751  & 5     & 1367  & 4     & 959 \\
					&       & $\bn_3$ & 7     & 3532  & 5     & 2758  & 4     & 1929 \\
					& \multirow{3}[0]{*}{500} & $\bn_1$ & 62    & 3659  & 40    & 2684  & 18    & 1040 \\
					&       & $\bn_2$ & 63    & 15661 & 41    & 11337 & 17    & 4150 \\
					&       & $\bn_3$ & 63    & 31707 & 41    & 22908 & 17    & 8289 \\
					& \multirow{3}[0]{*}{1000} &$\bn_1$ & 122   & 7247  & 80    & 5296  & 32    & 1872 \\
					&       & $\bn_2$ & 125   & 31105 & 81    & 22415 & 32    & 7535 \\
					&       & $\bn_3$ & 126   & 63026 & 81    & 45289 & 32    & 15092 \\
					\hline
			\end{tabular*}
		\end{minipage}
	\end{center}
\end{table}

\section{Applications to the COVID-19 Data}\label{appl.sec}

In this section, we apply $T_{CQ}, T_{n,p}$, and $F_{n,p}$ to the COVID-19 data set introduced in Section~\ref{Intro.sec}. It is publicly available at \url{https://www.ncbi.nlm.nih.gov/geo/query/acc.cgi} with accession ID GSE152641. It is of interest  to check whether  the patients with COVID-19 and those  from healthy controls of the COVID-19 data have the same mean transcriptome profiles. Table~\ref{cov.tab1} displays the testing results of applying $T_{CQ}$, $T_{n,p}$,  and $F_{n,p}$ to the COVID-19 data. For easy comparison, the test statistics of $T_{CQ}, T_{n,p}$, and $F_{n,p}$ have been normalized so that they all have mean $0$ and variance $1$.  It is seen that the three  standardized test statistics are comparable and all the three $p$-values are very small and much smaller than $1\%$.  We then  conclude that the mean transcriptome profiles of the two groups of the COVID-19 data  are significantly different.  However, since  the estimated degrees of freedom $\hd_1$ is only $2.73$,  the normal approximation to the null distribution of  $T_{CQ}$ is unlikely to be adequate and hence   the $p$-value of $T_{CQ}$ is not  trustable.
\begin{table}[!h]
	\caption{\em Testing the equality of the mean transcriptom profiles   of the two groups of the  COVID-19 data.\label{cov.tab1}}
	\begin{center}
		\begin{tabular*}{0.8\textwidth}{@{\extracolsep{\fill}}p{0.1\textwidth}p{0.1\textwidth}p{0.1\textwidth}p{0.1\textwidth}p{0.1\textwidth}@{\extracolsep{\fill}}}
				\hline
			Method &Statistic &$p$-value &$\hd_1$ &$\hd_2$\\
			\hline
			$T_{CQ}$ &3.54 &$0.00020$ &- &-\\
			$T_{n,p}$ &3.78 &$0.00693$ &$2.73$ &-\\
			$F_{n,p}$ &3.73 &$0.00867$ &2.73 &171.76\\
					\hline
		\end{tabular*}
	\end{center}
\end{table}
\section{Concluding Remarks}\label{remark.sec}

In this paper, we proposed and studied an $F$-type test for  two-sample BF problems for high-dimensional data. When the two samples are normally distributed, for any given $n$ and $p$, it is easy to see that under the null hypothesis,  the proposed  $F$-type test statistic is  an $F$-type mixture,  a ratio of two independent $\chi^2$-type mixtures. Under some regularity conditions and the null hypothesis, we show that  the proposed $F$-type  test statistic and the  $F$-type mixture have the same normal and  non-normal limits.  It is then justified  to  approximate the null distribution of the proposed $F$-type test statistic using that of  the  $F$-type mixture, resulting in the so-called normal reference $F$-type test. We apply  the Welch--Satterthwaite $\chi^2$-approximation to the distributions of the numerator and the denominator of the $F$-type mixture respectively, resulting the so-called  $F$-approximation to  an  $F$-type mixture.  Simulation studies and a real data example  showed that in terms of size control,  the proposed normal reference $F$-type  test outperforms two existing tests for two-sample BF problems for high-dimensional data.

\section*{Funding and Acknowledgement}

Zhang and Zhu's research  was  partially supported by the National University of Singapore academic research grant R-155-000-212-114 and the National Institute of Education (NIE) start-up grant (NIE-SUG 6-22 ZTM), respectively.
\appendix
\bigskip{}
\centerline{APPENDIX: Technical proofs} \setcounter{equation}{0}
\global\long\def\theequation{A.\arabic{equation}}

\begin{proof}[Proof of Theorem~\ref{limit.thm}] We shall apply Theorems 1 and 2 of \cite{ZhangTSBF2020} for the proof of this theorem. Notice that under Conditions C1--C3, by applying  Theorem 2 of \cite{ZhangTSBF2020}, $\tr(\hbOmega_n)$ is ratio-consistent for $\tr(\bOmega_n)$ uniformly for all $p$. We can write
		\[
		F_{n,p,0}=\frac{T_{n,p,0}}{\tr(\bOmega_n)}[1+o_p(1)],\;\mbox{ and }\;\tF_{n,p,0}=\frac{T_{n,p,0}-\tr(\bOmega_n)}{\sqrt{2\tr(\bOmega_n^2)}}[1+o_p(1)].
		\]
		From (\ref{EVB.equ}), we have $\E(S_{n,p,0}^*)=\tr(\bOmega_n)$ and
		\[ \Var\left[S_{n,p,0}^*/\tr(\bOmega_n)\right]=\frac{2[n_2^{2}(n_1-1)^{-1}\tr(\bSigma_1^2)+n_1^{2}(n_2-1)^{-1}\tr(\bSigma_2^2)]}
		{n_2^{2}\tr^2(\bSigma_1)+n_1^{2}\tr^2(\bSigma_2)+2n_1n_2\tr(\bSigma_1)\tr(\bSigma_2)}.
		\]
		Under Condition C3,	as $n\to\infty$, $ 	\Var\left[S_{n,p,0}^*/\tr(\bOmega_n)\right]\to 0$ uniformly for all $p$. That is, $S_{n,p,0}^*/\tr(\bOmega_n)\convP 1$ 	uniformly for all $p$.  Therefore, we can write	
		\[
		F^*_{n,p,0}=\frac{T^*_{n,p,0}}{\tr(\bOmega_n)}[1+o_p(1)],\;\mbox{ and }\;\tF^*_{n,p,0}=\frac{T^*_{n,p,0}-\tr(\bOmega_n)}{\sqrt{2\tr(\bOmega_n^2)}}[1+o_p(1)].
		\]		
		Then under Conditions C1--C4, as $n,p\to\infty$, Theorem~\ref{limit.thm}(a) and (\ref{supcvg.sec2}) follow directly from Theorem 1(a) of \cite{ZhangTSBF2020}, and under Conditions C1--C3 and C5, as $n,p\to\infty$, Theorem~\ref{limit.thm}(b) and (\ref{supcvg.sec2}) follow directly from Theorem 1(b) of \cite{ZhangTSBF2020}.
		\end{proof}
	\begin{proof}[Proof of Theorem~\ref{power.thm}]
		We first prove (a). Under Conditions C1--C4, Theorem~\ref{limit.thm}(a) indicates that as $n,p\to\infty$ we have $(F_{n,p,0}-1)/\sqrt{2/d_1}\convL \zeta$. In addition, under Conditions C1--C3, as $n\to\infty$, we have $\hd_1/d_1\convP 1$ and $\hd_2/d_2\convP 1$ uniformly for all $p$. Therefore, under the given conditions, we have
		\[
		\begin{split}
			&\quad\Pr\left[F_{n,p}\geq F_{\hd_1,\hd_2}(\alpha)\right]\\
			&=\Pr\left[F_{n,p,0}\geq F_{\hd_1,\hd_2}(\alpha)-\frac{n_1n_2n^{-1}\|\bmu_1-\bmu_2\|^2}{\tr(\hbOmega_n)}\right][1+o(1)]\\
			&=\Pr\left[\frac{F_{n,p,0}-1}{\sqrt{2/d_1}}\geq \frac{F_{\hd_1,\hd_2}(\alpha)-1}{\sqrt{2/d_1}}-\frac{n_1n_2n^{-1}\|\bmu_1-\bmu_2\|^2}{\sqrt{2/d_1}\tr(\hbOmega_n)}\right][1+o(1)]\\
			&=\Pr\left\{\zeta\geq \frac{F_{d_1,d_2}(\alpha)-1}{\sqrt{2/d_1}}-\frac{n\tau(1-\tau)\|\bmu_1-\bmu_2\|^2}{\left[2\tr(\bOmega^2)\right]^{1/2}}\right\}[1+o(1)].\\
		\end{split}
		\]
		Next we prove (b). Under Conditions C1--C3 and C5, Theorem~\ref{limit.thm}(b) indicates that as $n\to\infty$, we have $(F_{n,p,0}-1)/\sqrt{2/d_1}\convL \calN(0,1)$.
		By Remark~\ref{Fapprox.rem}, we have $[F_{d_1,d_2}(\alpha)-1]/\sqrt{2/d_1}\to z_\alpha$ when $d_2\to\infty$. Therefore, under the given conditions, we have
		\[
		\begin{split}
			&\quad\Pr\left[F_{n,p}\geq F_{\hd_1,\hd_2}(\alpha)\right]\\
			&=\Pr\left[F_{n,p,0}\geq F_{\hd_1,\hd_2}(\alpha)-\frac{n_1n_2n^{-1}\|\bmu_1-\bmu_2\|^2}{\tr(\hbOmega_n)}\right][1+o(1)]\\
			&=\Pr\left[\frac{F_{n,p,0}-1}{\sqrt{2/d_1}}\geq \frac{F_{\hd_1,\hd_2}(\alpha)-1}{\sqrt{2/d_1}}-\frac{n_1n_2n^{-1}\|\bmu_1-\bmu_2\|^2}{\sqrt{2/d_1}\tr(\hbOmega_n)}\right][1+o(1)]\\
			&=\Phi\left\{-z_{\alpha}+\frac{n\tau(1-\tau)\|\bmu_1-\bmu_2\|^2}{\left[2\tr(\bOmega^2)\right]^{1/2}}\right\}[1+o(1)],\\
		\end{split}
		\]
		where $\Phi(\cdot)$ denotes the cumulative distribution of $\calN(0,1)$.
	\end{proof}

\bibliographystyle{apalike}
\bibliography{HDMeanTest}


\end{document}